\DeclareRobustCommand{\arr}{%
 \mathrel{\mathpalette\short@to\relax}%
}
\newcommand{\short@to}[2]{%
  \mkern2mu
  \clipbox{{.3\width} 0 0 0}{$\m@th#1\vphantom{+}{\shortrightarrow}$}%
  }
\theoremstyle{plain}
    \newtheorem{theorem}{Theorem}[section]
    \newtheorem{proposition}[theorem]{Proposition}
    \newtheorem{lemma}[theorem]{Lemma}
    \newtheorem{corollary}[theorem]{Corollary}
\theoremstyle{definition}
    \newtheorem{definition}[theorem]{Definition}
    \newtheorem{example}[theorem]{Example}
    \newtheorem{remark}[theorem]{Remark}
\def\Alphabet{A,B,C,D,E,F,G,H,I,J,K,L,M,N,O,P,Q,R,S,T,U,V,W,X,Y,Z}
\def\grabet{a,b,c,d,e,f,g,h,i,j,k,l,m,n,o,p,q,r,s,t,u,v,w,x,y,z}
\def\endpiece{xxx}
\def\makeAlphabet[#1]{\expandafter\makeA#1,xxx,}
\def\makealphabet[#1]{\expandafter\makea#1,xxx,}
\def\makeA#1,{\def\temp{#1}\ifx\temp\endpiece\else%
\mkbb{#1}\mkfrak{#1}\mkbf{#1}\mkcal{#1}\mkscr{#1}\mkbs{#1}\expandafter\makeA\fi}%
\def\makea#1,{\def\temp{#1}\ifx\temp\endpiece\else\mkfrak{#1}\mkbf{#1}\mkbs{#1}\expandafter\makea\fi}%
\def\mkbb#1{\expandafter\def\csname bb#1\endcsname{\mathbb{#1}}}
\def\mkfrak#1{\expandafter\def\csname fr#1\endcsname{\mathfrak{#1}}}
\def\mkbf#1{\expandafter\def\csname b#1\endcsname{\mathbf{#1}}}
\def\mkcal#1{\expandafter\def\csname c#1\endcsname{\mathcal{#1}}}
\def\mkscr#1{\expandafter\def\csname s#1\endcsname{\mathscr{#1}}}
\def\mkbs#1{\expandafter\def\csname bs#1\endcsname{{\boldsymbol{#1}}}}
\def\makeop[#1]{\xmakeop#1,xxx,}
\def\mkop#1{\expandafter\def\csname #1\endcsname{{\mathrm{#1}}}} %
\def\xmakeop#1,{\def\temp{#1}\ifx\temp\endpiece\else\mkop{#1}\expandafter\xmakeop\fi}%
\def\makeup[#1]{\xmakeup#1,xxx,}
\def\mkup#1{\expandafter\def\csname #1\endcsname{{\mathrm{#1}\,}}} %
\def\xmakeup#1,{\def\temp{#1}\ifx\temp\endpiece\else\mkup{#1}\expandafter\xmakeup\fi}%
\def\e{\eta}
\def\La{\Lambda}
\def\Z{\bbZ}
\def\E{\bbE}
\def\R{\bbR}
\def\abs#1{|#1|}
\def\Dabs#1{\bigl|\kern-0.3mm\bigl|#1\bigr|\kern-0.3mm\bigr|}
\def\C{\operatorname{Consv}^\phi}
\def\sd{{*'}}
\def\sdd{{*''}}
\def\p{{\partial}}
\def\ms{{\operatorname{ms}}}
\def\EX{{\operatorname{ex}}}
\begin{document}

\setcounter{tocdepth}{2}

\title[Uniform Functions]{On Uniform Functions on Configuration Spaces of Large Scale Interacting Systems}
\author[Bannai]{Kenichi Bannai}\email{bannai@math.keio.ac.jp}
\author[Sasada]{Makiko Sasada}\email{sasada@ms.u-tokyo.ac.jp}
\thanks{This work is supported by JST CREST Grant Number JPMJCR1913 and KAKENHI 24K21515.}
\address[Bannai]{Department of Mathematics, Faculty of Science and Technology, Keio University, 3-14-1 Hiyoshi, Kouhoku-ku, Yokohama 223-8522, Japan.}
\address[Sasada]{Department of Mathematics, University of Tokyo, 3-8-1 Komaba, Meguro-ku, Tokyo 153-0041, Japan.}
\address[Bannai, Sasada]{Mathematical Science Team, RIKEN Center for Advanced Intelligence Project (AIP),1-4-1 Nihonbashi, Chuo-ku, Tokyo 103-0027, Japan.}

\date{\today}
\begin{abstract}
	Stochastic large scale interacting systems
	can be studied via the observables, i.e.\ functions on the underlying configuration space.
	In our previous article, we introduced the concept of \emph{uniform functions}, which are
	suitable class of functions on configuration spaces underlying stochastic systems
	on infinite graphs. An important consequence is the successful characterization of 
	conserved quantities without introducing the notion of stationary distributions.
	In this article, we further 
	develop the theory of
	uniform functions and construct the theory independent of any choice of a base state.
	Furthermore, we generalize the notion of interactions given in our previous article	
	to accommodate the case where
	there are multiple possible state transitions on adjacent vertices.
	We then prove that
	 if the interaction is \emph{exchangeable}, then
	 any uniform function which gives a global conserved quantity 
	 can be expressed as a sum of local conserved quantities of the interaction.
	Contrary to our previous article,
	 we do not need to assume that
	the interaction
	 is irreducibly quantified.
	 This shows that our theory of uniform functions on
	 configuration spaces over infinite graphs
	 with transition structure given by an exchangeable interaction
	 is a natural framework to study general stochastic large scale interacting systems. 
	 While some of the ideas in this article are based on our previous article, the article is logically independent and self-contained.
\end{abstract}

\subjclass[2020]{Primary: 82C22, Secondary: 05C63} 
\keywords{uniform function, configuration space, large scale interacting system, interaction, conserved quantity, cohomology}
\maketitle

%
%
%
\section{Introduction}\label{sec: introduction}
%
%
%

Deriving the macroscopic evolution from the dynamics of microscopic systems is a very fundamental and challenging task. As a mathematically rigorous theory, hydrodynamic limits for stochastic large scale interacting systems have been widely studied. In order to provide a new perspective for the analysis of such large scale interacting systems, in our previous article  \cite{BKS20}, we introduced the concept of configuration space with transition 
structure and the associated space of \emph{uniform functions}.
The uniform functions form a suitable class of functions including the conserved quantities on 
configuration spaces on infinite graphs
underlying stochastic large scale interacting systems.
The theory was extended in \cite{BS21L2} under certain assumptions to give a proof of 
Varadhan's decomposition for hydrodynamic limits.
In \cite{BKS20} and \cite{BS21L2}, the theory was constructed using a choice of a 
fixed base state of the local state space.
In this article, we construct the theory independent of any 
such choice of base state.  
Furthermore, we expand the notion
of interactions to accommodate the case when the 
transitions between states on adjacent vertices is given by a general 
symmetric \emph{digraph} (i.e.\ \emph{directed graph}),
which we again call an \emph{interaction}.
We then prove that if the interaction is \emph{exchangeable},
then any uniform function on the configuration space
invariant under transitions can be
expressed as the sum of the local conserved quantities of the interaction.
This is a generalization of \cite{BKS20}*{Theorem 3.7}.
Although we build on ideas initiated in \cite{BKS20},
our article is logically independent of such results.

\begin{figure}[htbp]
	\begin{center}
		\includegraphics[width=0.8\linewidth]{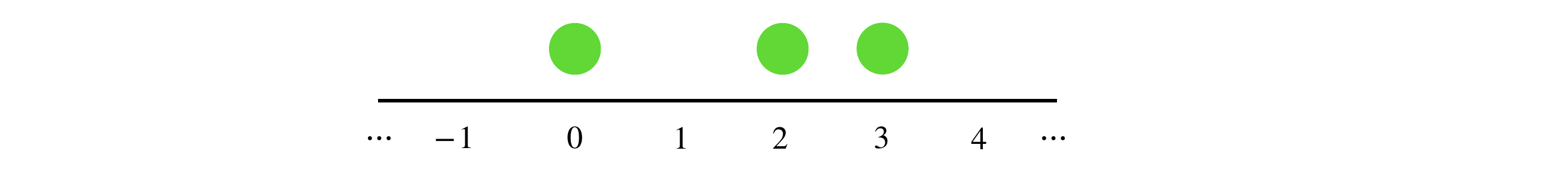}
		\caption{Example of a configuration in the Configuration Space $S^X=\{0,1\}^\Z$.}\label{Fig: 01}
	\end{center}
\end{figure}

Let $S$ be a non-empty set, which we call the \emph{local state space}.
The fundamental example is $S=\{0,1\}$, where $0$ may express that 
the site is vacant and $1$ may express that the site is occupied by a particle.
For a symmetric digraph $(X,E)$ with set of vertices $X$ and set of edges $E\subset X\times X$,
the configuration space of $S$ on $(X,E)$ is defined as $S^X\coloneqq\prod_{x\in X}S$.
We call any $\e=(\e_x)_{x\in X}\in S^X$ a \emph{configuration}.
The fundamental example of such graph is given by the \emph{one}-dimensional Euclidean lattice $(\Z,\E)$, 
where $\E=\{ (i,j)\in \Z\times\Z\mid |i-j|=1 \}$, and the configuration space
$S^X=\{0,1\}^\Z$ describes all of the possible combination of states on the Euclidean lattice.

The configurations will be quantified via the observables, i.e.\ functions $f\colon S^X\rightarrow\R$ on the
configuration space.  The premise of our model is that the observables
should depend only on the local states in the vicinity of the point of observation.  
Hence given an observable $f_x\colon S^X\rightarrow\R$ at $x\in X$,
 the value $f(\e)$ for a configuration $\e=(\e_x)_{x\in X}\in S^X$ should 
 depend on the components near $x$.
The graph structure $(X,E)$ induces the graph distance $d_X$ on the vertices in $X$,
i.e.\ the length of the shortest path between two vertices. 
We say that a function $f_x\colon S^X\rightarrow\R$ for $x\in X$ is \emph{local},
if there exists $R>0$ such that the value $f_x(\e)$ depends only on $\e_z$ 
such that $d_X(x,z)\leq R$.
In this case, we say that $f_x$ is \emph{local at $x$ with radius $R$}.
Such functions will play the role of the observables in the vicinity of $x$.
Given a system of functions $(f_x)_{x\in X}$ with $f_x$ local at $x$,
in order to get a suitable quantity for the entire system, we would like to consider the sum
\begin{equation}\label{eq: sum}
	f\coloneqq\sum_{x\in X}f_x.
\end{equation}
If the original function $f_x$ expresses the number of particles or energy or any other quantity
for the local state at $x$, then $f$ would express the total number of particles or total energy or 
the total of any other quantity for the entire system on $X$.  In considering the hydrodynamic limits
of large scale interacting systems, it is useful to consider cases when $X$ is infinite.
However, the sum \cref{eq: sum} would be an infinite sum and generally not well-defined for such $X$.
Variants of infinite sums of the form \cref{eq: sum} appear in  \cite{KL99}*{p.144}
and \cite{KLO94}*{p.1477}, but with the caveat that 
the infinite sum ``does not make sense''.  
In \cref{def: uniform}, we define uniform functions to be a certain sum of local functions on $S^X$,
which gives a rigorous definition for sums such as \cref{eq: sum}, even for the case when $X$ is infinite.

In what follows, we assume that the symmetric digraph $(X,E)$ is connected and \emph{locally finite},
i.e.\ the set $E_x\coloneqq \{ (x,y) \in E\mid y\in X \}$ is a finite set 
for any $x\in X$.  For example, the Euclidean lattice $(\Z,\E)$ is locally finite.
Our first result is the following.

\begin{proposition}[=\cref{prop: sum}]\label{prop: one}
	Let $S$ be a non-empty set, and assume that $(X,E)$ is connected and locally finite.
	Consider a system of functions $(f_x)_{x\in X}$ on $S^X$.  If the system is uniformly local, i.e.\ if
	there exists $R>0$ such that $f_x$
	is local at $x$ for radius $R$, then the sum
	\[
		f\coloneqq\sum_{x\in X}f_x
	\]
	defines a uniform function, a class of functions defined in \cref{def: uniform}.
\end{proposition}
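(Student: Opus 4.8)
The plan is to unwind the definition of uniform function in \cref{def: uniform} and verify its defining conditions for the formal sum $f\coloneqq\sum_{x\in X}f_x$. The essential point, and the only place where the hypotheses on $(X,E)$ are used, is that local finiteness forces every ball to be finite: writing $B_R(z)\coloneqq\{x\in X\mid d_X(x,z)\le R\}$, an induction on $R$ using that each $E_x$ is finite shows $\abs{B_R(z)}<\infty$ for every $z\in X$ and every $R>0$. Here $B_{R+1}(z)\subseteq B_R(z)\cup\bigcup_{x\in B_R(z)}\{y\mid (x,y)\in E\}$ is a finite union of finite sets, and for real $R$ one reduces to the integer case since $d_X$ takes integer values. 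Connectedness is not needed for this step, but it guarantees that $d_X(x,z)<\infty$ for all $x,z$, so that the balls are the correct bookkeeping device.

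First I would record the locality bookkeeping. Since $f_x$ is local at $x$ with radius $R$, the value $f_x(\e)$ depends only on the components $\e_z$ with $z\in B_R(x)$; equivalently, by symmetry of $d_X$, the term $f_x$ is sensitive to the coordinate at a site $z$ only when $x\in B_R(z)$. Consequently, if two configurations $\e,\e'\in S^X$ differ only on a finite set $\Lambda\subset X$, then $f_x(\e)\ne f_x(\e')$ can occur only for $x\in\bigcup_{z\in\Lambda}B_R(z)$, which is finite by the previous paragraph. Hence the a priori infinite sum
\[
	\sum_{x\in X}\bigl(f_x(\e')-f_x(\e)\bigr)
\]
has only finitely many nonzero summands and defines a genuine real number. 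This is precisely the mechanism that makes the increments of the formal sum $\sum_{x\in X}f_x$ well-defined even when $X$ is infinite and the sum itself is meaningless.

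With this in hand, the remaining work is to match these well-defined increments to the data required by \cref{def: uniform} and to check the structural conditions it imposes, namely the additive (cocycle) compatibility of the increments over configurations differing at finitely many sites, together with the uniform-locality bound packaged into the definition. The additivity relation $\sum_{x}(f_x(\e'')-f_x(\e))=\sum_{x}(f_x(\e'')-f_x(\e'))+\sum_{x}(f_x(\e')-f_x(\e))$ is immediate once all three sums are known to be finite, which again follows by applying the finiteness argument to the finite supports of the pairwise differences. I would then conclude by exhibiting $(f_x)_{x\in X}$ itself as the representing uniformly local system, so that its formal sum lies in the class of \cref{def: uniform}.

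The main obstacle I anticipate is definitional rather than analytic: the proposition genuinely asserts that the concrete construction $\sum_{x\in X}f_x$ satisfies the abstract conditions of \cref{def: uniform}, so the crux is to identify precisely which finiteness and consistency conditions that definition demands and to confirm each from the two facts above, namely finiteness of balls and finite support of increments. All of these reduce to the single quantitative input that $\bigcup_{z\in\Lambda}B_R(z)$ is finite whenever $\Lambda$ is finite; once that is established, no estimate beyond counting is required.
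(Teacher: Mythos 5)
There is a genuine gap, and it is exactly where you anticipated trouble: the proposal never engages with what \cref{def: uniform} actually demands. In the paper, a uniform function is a function on $S^X_*$ (configurations agreeing with a base state $*$ outside a finite set) whose unique expansion $f=\sum_{\La\in\sI}f^*_\La$ into components of exact support $\La$, furnished by \cref{prop: expansion}, satisfies $f^*_\La=0$ whenever $\diam(\La)$ exceeds some bound. It is \emph{not} defined by well-defined increments plus a cocycle condition, and it is \emph{not} defined as ``the formal sum of a uniformly local system'' --- the latter characterization is precisely the content of \cref{prop: sum} together with its converse \cref{prop: converse}, so your concluding step (``exhibiting $(f_x)_{x\in X}$ itself as the representing uniformly local system, so that its formal sum lies in the class of \cref{def: uniform}'') assumes the proposition you are trying to prove. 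Moreover, your increments-only framework cannot distinguish uniform functions at all: \emph{every} function on $S^X_*$ has well-defined, additive increments between configurations differing at finitely many sites, so the ``structural conditions'' you propose to verify are vacuous as a characterization. Relatedly, your remark that ``the sum itself is meaningless'' is the opposite of the first half of the paper's proof: choosing a base state and normalized representatives with $f_x(\star)=0$, for each $\e\in S^X_*$ only the finitely many $x$ with $B(x,R)\cap\Supp(\e)\neq\emptyset$ contribute (your finite-ball observation, which is correct and is used by the paper for exactly this purpose), so $f(\e)=\sum_{x\in X}f_x(\e)$ is a genuine finite sum defining an element of $C^0(S^X_*)$.

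The missing substantive step is the diameter bound on the exact-support expansion. Once $f$ is a bona fide function on $S^X_*$, the paper expands each term as $f_x=\sum_{\La\subset B(x,R)}(f_x)^*_\La$ via \cref{prop: expansion} and then invokes the \emph{uniqueness} of the expansion to identify $f^*_\La=\sum_{x:\,\La\subset B(x,R)}(f_x)^*_\La$; since $\La\subset B(x,R)$ forces $\diam(\La)\leq 2R$, one gets $f^*_\La=0$ whenever $\diam(\La)>2R$, which is verbatim the condition of \cref{def: uniform}. Nothing in your proposal produces an exact-support decomposition, let alone this bound, and no amount of counting finiteness of balls or checking additivity of increments substitutes for it. To repair the argument you would need: (i) the normalization $f_x(\star)=0$ and pointwise finiteness on $S^X_*$, for which your ball-finiteness lemma suffices; and (ii) the uniqueness clause of \cref{prop: expansion} applied to the double sum $\sum_x\sum_{\La\subset B(x,R)}(f_x)^*_\La$ to extract the bound $\diam(\La)\leq 2R$.
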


\cref{prop: one} indicates that the infinite sum \cref{eq: sum} has meaning as a uniform function.
In fact, we will prove that any uniform function may be obtained as the sum of a system of functions 
on $S^X$ which are uniformly local.

In constructing our large scale interacting system, we express the possible transition
of states on adjacent vertices of the underlying graph.  In our case, this is given by 
an \emph{interaction} $(S,\phi)$, which we define 
$\phi$ as a subset of $(S\times S)\times (S\times S)$ such that $(S\times S,\phi)$ is a symmetric digraph.
For the case $S=\{0,1\}$ described in \cref{Fig: 02}, the configuration $(0,0)\in S\times S$
expresses the state with no particles, $(1,0)\in S\times S$ expresses the state with a particle in the first site and no particles in the second site, etc.  Then the exclusion 
$\phi_\EX\coloneqq\{((1,0),(0,1)),((0,1),(1,0))\}\subset(S\times S)\times(S\times S)$
expresses the rule that a particle may move only if the adjacent site is vacant.

\begin{figure}[htbp]
	\begin{center}
		\includegraphics[width=0.9\linewidth]{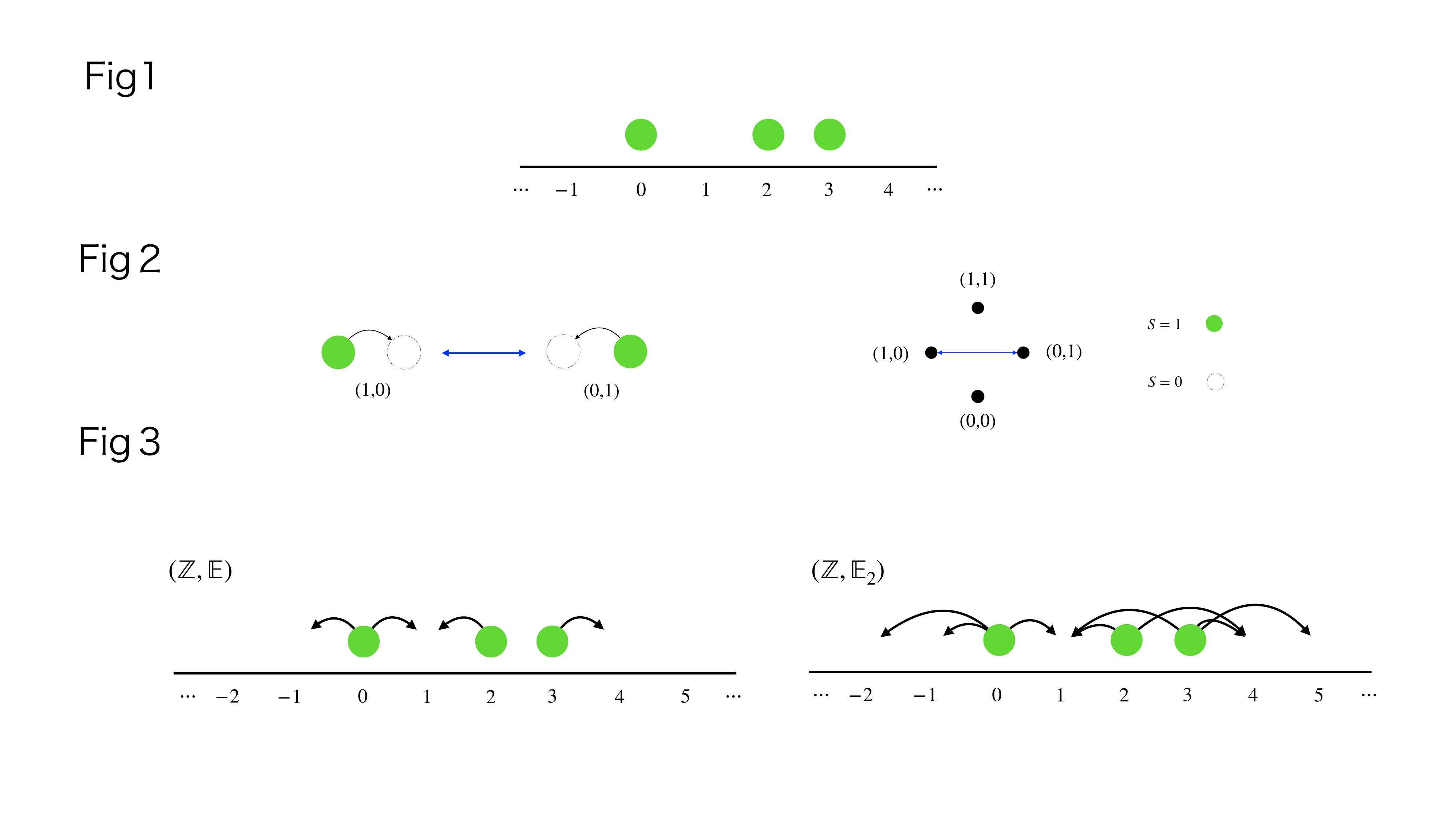}
		\caption{The exclusion $\phi_\EX$ for $S=\{0,1\}$
		expresses the rule that a particle may move only if the adjacent site is vacant.
		The diagram on the right expresses the graph $(S\times S,\phi_\EX)$.}\label{Fig: 02}
	\end{center}
\end{figure}

A choice of an interaction gives the transitions of $S^X$.  Namely, for $\e=(\e_x), \e'=(\e_x)\in S^X$,
the configuration $\e$ may transition to $\e'$ if and only if there exists an edge $e=(x,y)\in E$ such that $\e_z=\e'_z$ for $z\neq x,y$ and $((\e_x,\e_y),(\e'_x,\e'_y))\in\phi$.
If we denote by $\Phi_E\subset S^X\times S^X$ the set of all such permitted transitions, then $(S^X,\Phi_E)$ form a
symmetric digraph.
Our construction allows not only nearest neighbor models but more general models by changing the graph
$(X,E)$.  For the exclusion $\phi_\EX$, 
if we take the graph $(X,E)$ to be the Euclidean lattice 
$(\Z,\E)$, then this model underlies the nearest neighbor exclusion process.  
The exclusion process is one of the most fundamental models of the interacting particle systems 
and has been studied extensively \cites{Spo91,KL99,KLO94,KLO95,Lig99,FHU91,FUY96,GPV88,VY97}.

\begin{figure}[htbp]
	\begin{center}
		\includegraphics[width=1\linewidth]{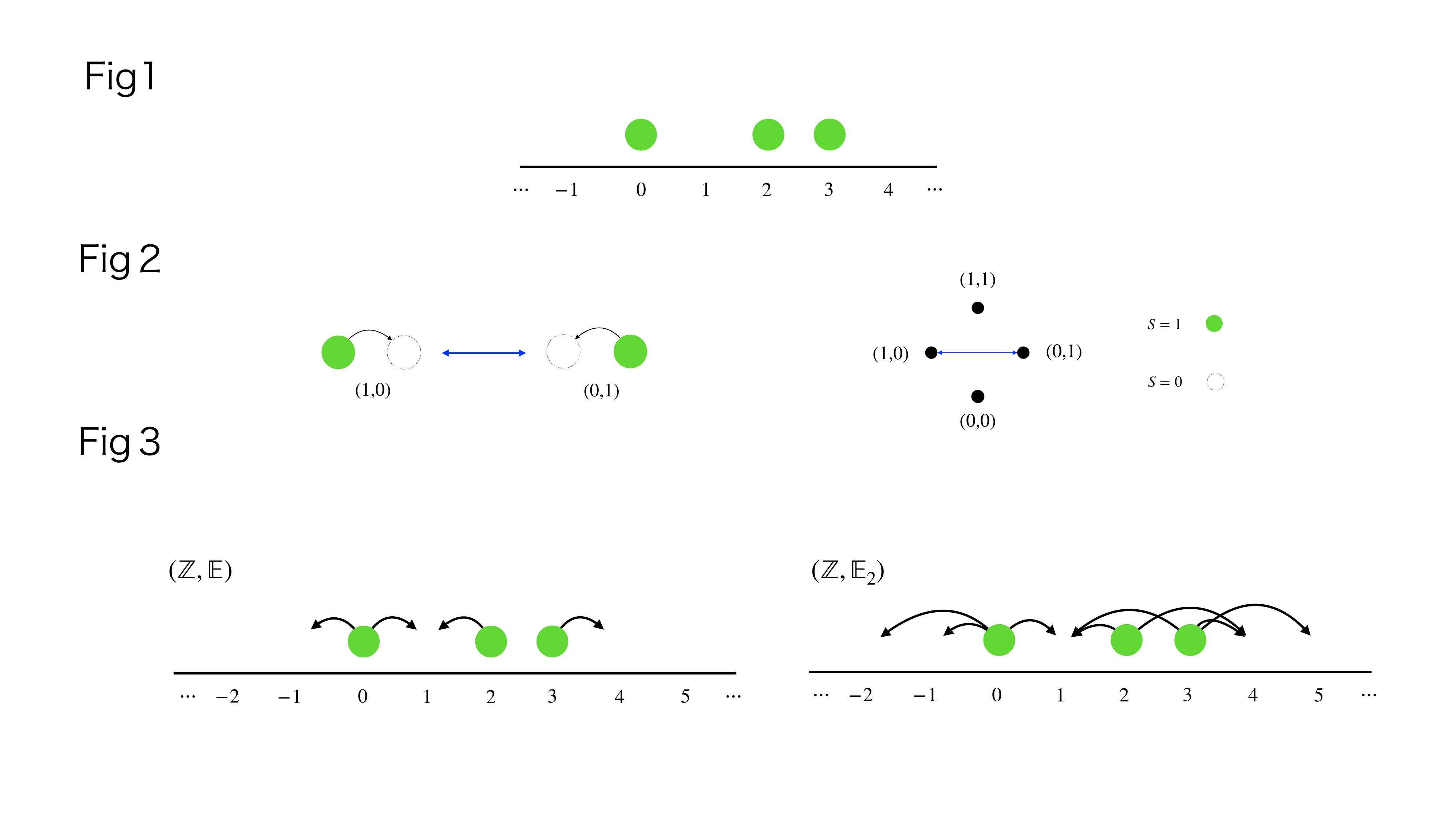}
		\caption{The transitions on $\{0,1\}^\Z$ induced by the exclusion $\phi_\EX$
		for the cases where the underlying graphs are $(\Z,\E)$ and $(\Z,\E_2)$.
		The interaction takes place between vertices connected by an edge of the underlying graph.}\label{Fig: 31}
	\end{center}
\end{figure}

Moreover, if we take the graph $(X,E)$ to
be $(\Z,\E_k)$ with $\E_k\coloneqq\{ (i,j)\in\Z\times\Z \mid 1\leq \abs{i-j}\leq k \}$ for an integer $k\geq 1$, then this model underlies the exclusion process which allows hopping to vacant sites of distance up to $k$.
This versatility of the model is one reason that 
we have separated the interaction from the underlying graph.

Given an interaction $\phi\subset (S\times S)\times (S\times S)$, we say that a function
$\xi\colon S\rightarrow\R$ is a \emph{conserved quantity} for $\phi$, if the function
$\tilde{\xi}\colon S\times S\rightarrow\R$ given by
\begin{equation}\label{eq: CQ1}
	\tilde{\xi}(s_1,s_2)\coloneqq\xi(s_1)+\xi(s_2),\qquad\forall (s_1,s_2)\in S\times S
\end{equation}
is constant on the connected components of the graph $(S\times S,\phi)$.
In other words, $\xi$ is a function whose sum is preserved by transitions of an interaction.

For a conserved quantity $\xi$, let $\xi_x(\e)\coloneqq\xi(\e_x)$ for any $\e=(\e_x)\in S^X$.
Then by definition, $\xi_x$ for any $x\in X$ is local at $x$ with radius $0$, hence by \cref{prop: one},
\[
	\xi_X\coloneqq\sum_{x\in X}\xi_x
\]
defines a uniform function.
For the case $(\{0,1\},\phi_\EX)$, the function $\xi\colon\{0,1\}\rightarrow\R$
given by $\xi(s)=s$ is a conserved quantity of $(\{0,1\},\phi_\EX)$.
If $X$ is finite,
then $\xi_X\colon \{0,1\}^X \rightarrow\R$ is a function such that
$\xi_X(\e)=\sum_{x\in X}\xi(\e_x)$ for $\e=(\e_x)\in \{0,1\}^X$ gives the total number of particles of $\e$.

\begin{definition}\label{def: exchangeable}
	We say that an interaction $(S,\phi)$ 
	is \emph{exchangeable} if and only if 
	for any $(s_1,s_2)\in S\times S$,
	the configurations
	$(s_1,s_2)$ and $(s_2,s_1)$ are in the same connected component of 
	the graph $(S\times S,\phi)$.
\end{definition}

The interaction $(\{0,1\},\phi_\EX)$ is exchangeable.
The article \cite{BKSWY24} gives classifications and examples of various 
exchangeable interactions, hence there are an abundance of examples of exchangeable interactions.
Let $(X,E)$ be connected and locally finite 
as in \cref{prop: one}.
Since a uniform function is given by an infinite sum, it is in general not an actual function on the configuration space.
However, one may define the difference of values of a uniform function
at $\e,\e'\in S^X$ with respect to any transition $(\e,\e')\in\Phi_E$. 
Hence a uniform function behaves as a potential on the configuration space.

We say that a uniform function on $S^X$ is invariant via transitions of $(S^X,\Phi_E)$,
if the difference of values with respect to any transition is \emph{zero}.  The main theorem of our article is as follows.

\begin{theorem}[=\cref{thm: main}]\label{thm: 1}
	Let $(S,\phi)$ be an interaction which is exchangeable, 
	and assume that $(X,E)$ is connected, locally finite and infinite.
	Suppose $f$ is a uniform function on $S^X$
	which is invariant via transitions of
	$(S^X,\Phi_E)$.  
	Then there exists a conserved quantity $\xi\colon S\rightarrow\R$ such that $f=\xi_X$.
\end{theorem}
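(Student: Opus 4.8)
The plan is to choose an explicit uniformly local representative of $f$, rephrase invariance as a finite local identity, and then inductively strip away the dependence on distant sites until only single-site functions remain. First I would use the converse to \cref{prop: one} noted above to write $f=\sum_{x\in X}f_x$ for a uniformly local system $(f_x)_{x\in X}$, with each $f_x$ local at $x$ of a common radius $R$. For a transition $(\e,\e')\in\Phi_E$ along an edge $e=(x,y)$, only the finitely many $f_z$ with $z$ at distance $\le R$ from $\{x,y\}$ are affected, so the difference of values $\nabe f\coloneqq f(\e')-f(\e)=\sum_z\bigl(f_z(\e')-f_z(\e)\bigr)$ is a well-defined finite sum, and the hypothesis becomes $\nabe f=0$ for every transition of $(S^X,\Phi_E)$.

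The first structural consequence I would extract from exchangeability is \emph{swap invariance}. Fix an edge $e=(x,y)\in E$ and a configuration $\e$, and let $\e^{xy}$ denote the configuration obtained by exchanging the states at $x$ and $y$. By \cref{def: exchangeable} the pairs $(\e_x,\e_y)$ and $(\e_y,\e_x)$ lie in the same connected component of $(S\times S,\phi)$, so there is a finite path of transitions along $e$ joining $\e$ to $\e^{xy}$; summing $\nabe f=0$ along this path yields $f(\e^{xy})=f(\e)$ in the sense of differences. Thus $f$ is unchanged by exchanging the states at the endpoints of any edge, and since $(X,E)$ is connected these adjacent transpositions generate arbitrary finite permutations of the sites.

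The heart of the argument is then a \emph{reduction of radius}. Using the local identities $\nabe f=0$ together with swap invariance, I would show that the representative of radius $R$ may be replaced by another representative $f=\sum_{x\in X}g_x$ of the \emph{same} uniform function with $(g_x)$ uniformly local of radius $R-1$. Concretely, one isolates in each $f_x$ the part genuinely depending on the sites at distance exactly $R$ from $x$; the invariance identities constrain these top-radius parts across neighbouring terms, and exchangeability supplies enough transitions to rewrite them as discrete differences that can be redistributed to adjacent vertices, lowering the effective radius while leaving $\sum_x f_x$ unchanged. Iterating from radius $R$ down to $0$ produces $f=\sum_{x\in X}h_x(\e_x)$ for single-site functions $h_x\colon S\to\R$.

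It then remains to treat this single-site representative. For an edge $(x,y)$, swap invariance forces $h_x-h_y$ to be constant on $S$; connectedness of $X$ propagates this, and after absorbing constant single-site terms, which does not alter the underlying uniform function, all $h_x$ coincide with one function $\xi\coloneqq h_{x_0}$. Reading $\nabe f=0$ for a transition $((\e_x,\e_y),(\e'_x,\e'_y))\in\phi$ along $e=(x,y)$ then gives $\xi(\e'_x)+\xi(\e'_y)=\xi(\e_x)+\xi(\e_y)$, so that $\tilde{\xi}$ is constant on the connected components of $(S\times S,\phi)$; hence $\xi$ is a conserved quantity and $f=\xi_X$. The step I expect to be the main obstacle is the reduction of radius: making the transfer of the top-radius contributions precise and free of any choice of base state, verifying that the redistributed correction terms genuinely preserve the uniform function $f$, and confirming that the procedure terminates at radius $0$. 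The infiniteness of $X$ enters in the base case, where it is needed to rule out the purely combinatorial permutation-invariants that survive on finite graphs and to force the residual site-constants to vanish.
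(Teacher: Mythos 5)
Your opening moves match the paper's (swap invariance from exchangeability plus connectedness of $(X,E)$, as in \cref{lem: bijection}, and your final verification that the single-site function is a conserved quantity is the same computation as in the paper's proof of \cref{thm: main}), but the heart of your argument --- the reduction of radius --- is a hope rather than a proof, and in the form you state it, it cannot be made to work. First, an arbitrary uniformly local representative $f=\sum_{x\in X}f_x$ obtained from \cref{prop: converse} is highly non-unique, so ``the part of $f_x$ genuinely depending on the sites at distance exactly $R$ from $x$'' is not well defined; you would need a canonical decomposition even to formulate the claim. The paper supplies exactly this with the exact-support expansion of \cref{prop: expansion}: fix a base state $*$ and write $f=\sum_{\La\in\sI}f^*_\La$ uniquely with $f^*_\La$ of exact support $\La$; the theorem then reduces to the crisp assertion that $f^*_\La=0$ whenever $\abs{\La}\geq2$. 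Second, the mechanism you propose (rewriting top-radius parts as discrete differences and redistributing them to adjacent vertices) makes no use of the infiniteness of $X$, and you explicitly locate infiniteness only in the base case. That placement is wrong, and the unconditional reduction claim is false on finite graphs: for the exclusion interaction on a finite connected graph, $f(\e)=\bigl(\sum_{x}\e_x\bigr)^2=\sum_{x,y}\e_x\e_y$ is invariant via transitions and local, yet is not of the form $\sum_x h_x(\e_x)$; hence invariance and exchangeability alone cannot drive the radius down to $0$, and any correct version of your induction must invoke the infiniteness of $X$ at every stage, not merely at the end.

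The paper's replacement for your induction on radius is an induction on $\abs{\La}$ via a transport-to-distant-support argument (\cref{lem: single}): given $\La$ with $\abs{\La}\geq2$, choose $\La'\subset X$ with $\abs{\La'}=\abs{\La}$ and $\diam(\La')>R$ --- this is precisely where $X$ infinite, together with local finiteness and connectedness, is used --- so that uniformity forces $f^*_{\La'}=0$; then move a configuration supported on $\La$ to one supported on $\La'$ by a permutation of sites, which by exchangeability stays in a single connected component of $(S^X,\Phi_E)$ (\cref{lem: bijection}), and compare the two expansions of $f$ using the induction hypothesis together with the equality of all single-site pieces $f^*_{\{x\}}$ (\cref{lem: vertex}, itself proved by the same transport trick). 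This yields $f^*_\La=0$ directly, with no redistribution of correction terms and no termination issue. Your closing step for the single-site stage (swap invariance forces $h_x-h_y$ constant along edges, constants are absorbed by normalization, and a single edge transition then gives $\xi(\e'_x)+\xi(\e'_y)=\xi(\e_x)+\xi(\e_y)$) is correct and agrees with the paper, but as written your proposal is missing its central argument, and the sketch you give of it points in a direction that provably fails unless the infinitude of $X$ enters the reduction itself.
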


A function which is invariant via transitions of $S^X$ may be
viewed as a global conserved quantity of $S^X$.
\cref{thm: 1} implies that any global conserved quantity expressed by a uniform function
is given at the sum of a local conserved quantity of the interaction.
Hence our theory of uniform functions gives a class of well-behaved
global conserved quantities of the configuration space.
Contrary to \cite{BKS20}, we do not need to assume that $(S,\phi)$ is irreducibly quantified.

\cref{thm: 1} may be interpreted in terms of uniform cohomology.
Let $C_\loc(S^X)$ be the space of local functions, and let $C^0_\loc(S^X)$ be the space of local functions modulo the subspace of constant functions.  In other words, $C^0_\loc(S^X)\coloneqq C_\loc(S^X)/\sim$, where $f\sim g$ for $f,g\in C_\loc(S^X)$ if and only if $f-g$ is a constant function.
We may define the differential 
\[
	\p\colon C^0_\loc(S^X)\rightarrow C^1(S^X)
\]
by
$
	\p f(\e,\e')\coloneqq f(\e')-f(\e)
$
for any
$
	(\e,\e')\in\Phi_E,
$
where $C^1(S^X)$ is the subspace of $\Map(\Phi_E,\R)$ consisting of functions $\omega\colon\Phi_E\rightarrow\R$
such that $\omega(\e,\e')=-\omega(\e',\e)$ for any $(\e,\e')\in\Phi_E$.
Denote by $C^0_\unif(S^X)$ the space of uniform functions modulo the subspace of constant functions.
We may  linearly extend the above differential to obtain a differential 
\[
	\p\colon C^0_\unif(S^X)\rightarrow C^1(S^X).
\]
We define the $0$-th uniform cohomology by
\[
	H^0_\unif(S^X)\coloneqq\Ker\bigl(\p\colon C^0_\unif(S^X)\rightarrow C^1(S^X)\bigr).
\]
As an analogy of the fact that $0$-th cohomology $H^0(X)$ coincides with the space of functions
constant on the connected components of the graph $(X,E)$ (see \cite{BKS20}*{Proposition A.8}),
we may prove that the $0$-th uniform cohomology $H^0_\unif(S^X)$
coincides with the space of uniform functions invariant via transitions of $(S^X,\Phi_E)$,
i.e.\
constant on the connected components of $(S^X,\Phi_E)$.
Combining \cref{thm: 1} with this fact, we obtain the following corollary.

\begin{corollary}[=\cref{cor: main}]\label{cor: intro}
	Suppose $(S,\phi)$ and $(X,E)$ are as in \cref{thm: 1}.
	Then we have a canonical isomorphism
	\[
		\C(S)\cong H^0_\unif(S^X), \qquad \xi\mapsto\xi_X,
	\]
	where $\C(S)$ denotes the space of conserved quantities of $(S,\phi)$ modulo the subspace of constant functions.
\end{corollary}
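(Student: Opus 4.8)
The plan is to exhibit the assignment $\xi\mapsto\xi_X$ as a linear isomorphism by checking three things: that it carries conserved quantities into $H^0_\unif(S^X)$, that its kernel is precisely the constant functions on $S$, and that it is surjective. The last of these is exactly the content of \cref{thm: 1}, so the remaining work is the well-definedness and the injectivity, both of which I would deduce from the local-plus-sum structure of $\xi_X$. Throughout I would use the identification, stated just above, of $H^0_\unif(S^X)$ with the space of uniform functions invariant via transitions of $(S^X,\Phi_E)$ modulo constants; this is immediate from the definition $H^0_\unif(S^X)=\Ker(\p)$ together with the fact that $\p$ annihilates constants.

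First I would record that $\xi_X$ is a uniform function: since $\xi_x(\e)=\xi(\e_x)$ is local at $x$ with radius $0$, the system $(\xi_x)_{x\in X}$ is uniformly local, so \cref{prop: one} applies. Next I would show $\xi_X$ is invariant. For a transition $(\e,\e')\in\Phi_E$ arising from an edge $e=(x,y)\in E$, all coordinates other than $x,y$ agree, so
\[
	\p\xi_X(\e,\e')=\sum_{z\in X}\bigl(\xi(\e'_z)-\xi(\e_z)\bigr)=\tilde\xi(\e'_x,\e'_y)-\tilde\xi(\e_x,\e_y),
\]
with $\tilde\xi$ as in \cref{eq: CQ1}. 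Because $((\e_x,\e_y),(\e'_x,\e'_y))\in\phi$, the two pairs lie in the same connected component of $(S\times S,\phi)$, and since $\xi$ is a conserved quantity $\tilde\xi$ is constant there; hence $\p\xi_X(\e,\e')=0$. Thus $\xi_X$ is invariant and $[\xi_X]\in H^0_\unif(S^X)$, and linearity of $\xi\mapsto\xi_X$ is clear.

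For injectivity I would use that, being a sum of local functions, $\xi_X$ has a well-defined difference $\xi_X(\e')-\xi_X(\e)=\sum_{z\in X}(\xi(\e'_z)-\xi(\e_z))$ for any $\e,\e'$ agreeing outside a finite set, the sum being finite; a uniform function represents $0$ in $C^0_\unif(S^X)$, i.e.\ is constant, exactly when all such differences vanish. So suppose $[\xi_X]=0$ and fix $x_0\in X$ (possible since $X$ is infinite, hence nonempty). Given $s,s'\in S$, choose $\e,\e'$ agreeing off $\{x_0\}$ with $\e_{x_0}=s$ and $\e'_{x_0}=s'$; then $0=\xi_X(\e')-\xi_X(\e)=\xi(s')-\xi(s)$, so $\xi$ is constant and $[\xi]=0$ in $\C(S)$. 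The same computation shows the assignment descends to the quotient by constants, giving a well-defined injection. Finally, surjectivity is \cref{thm: 1}: any class in $H^0_\unif(S^X)$ is represented by an invariant uniform function $f$, and \cref{thm: 1} produces a conserved quantity $\xi$ with $f=\xi_X$, so $[f]$ is the image of $[\xi]$.

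The only genuinely hard input is the surjectivity supplied by \cref{thm: 1}; within the corollary itself the one point requiring care is the passage from ``$\xi_X$ is a constant uniform function'' to ``$\xi$ is constant on $S$,'' which rests on uniform functions possessing well-defined finite differences so that single-coordinate variations can detect non-constancy of $\xi$. Everything else is formal.
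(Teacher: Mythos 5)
Your proof is correct and follows essentially the same route as the paper: well-definedness via the invariance of $\xi_X$ (the paper's \cref{lem: CQ}), the identification of $H^0_\unif(S^X)$ with invariant uniform functions (\cref{lem: connected}), injectivity by evaluating on configurations supported at a single vertex, and surjectivity from \cref{thm: main}. The only cosmetic difference is that you detect injectivity through well-defined finite differences $\xi(s')-\xi(s)$, whereas the paper fixes a normalized representative with $\xi(*)=0$ and evaluates $\xi_X(\e^s)=\xi(s)$ directly; these are equivalent.
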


If $X$ is infinite, then $(S^X,\Phi_E)$ in general has an infinite number of connected components.
Hence the graph cohomology $H^0(S^X)$, which coincides with the space of functions constant
on the connected components of $(S^X,\Phi_E)$ is in general infinite dimensional.
However, if $\C(S)$ is finite dimensional, as in the case when $S$ is a finite set,
then \cref{cor: intro} implies that the space $H^0_\unif(S^X)$, which may be interpreted as the 
space of uniform functions constant on the connected components of $(S^X,\Phi_E)$,
is finite dimensional.
The theory of uniform functions allows for the construction of such well-behaved cohomology theory.
We emphasize that Theorem \ref{thm: 1} and Corollary \ref{cor: intro} hold in general only when $X$ is infinite. 

\medskip

The precise contents of this article are as follows.  In \S\ref{sec: UF},
we define the space of uniform functions on a configuration space 
$S^X$ of a local state space $S$ over a symmetric digraph $(X,E)$,
and prove that the space of uniform functions is independent of any choice of the base state.
We then define the notion of a uniform system of local functions and prove
in \cref{prop: sum}
that if $(X,E)$ is connected and locally finite, then class of uniform functions coincides with the class of 
functions that may be expressed as the sum of a uniform system of local functions.
In \S\ref{sec: CS}, we consider an interaction $(S,\phi)$ and the configuration space
with transition structure $(S^X,\Phi_E)$ over $(X,E)$.
Assuming that the interaction $(S,\phi)$ is exchangeable and 
the graph
$(X,E)$ is infinite, we prove our main result (\cref{thm: main}).
In \S\ref{sec: cohomology}, we introduce the differential for uniform functions and prove \cref{cor: main}.

%
%
%
\section{Uniform Functions on Configuration Spaces}\label{sec: UF}
%
%
%

In this section, we give the definition of uniform functions on the space of configurations on 
a symmetric digraph.  We then prove in \cref{prop: independent} that the space of uniform
functions for distinct choice of the base state $*\in S$ are canonically isomorphic.
Let $S$ be a non-empty set, which we call the \emph{local state space}.
An element of $S$ expresses a state of a system on a single site.
Furthermore, let $(X,E)$ be a symmetric digraph.  In other words, $X$ is a set called the \emph{set of vertices},
$E\subset X\times X$ is a set called the \emph{set of edges}, and we assume that $e=(oe,te)\in E$ if and only if
 $\bar e=(te,oe)\in E$.
The set $X$ gives the underlying space for the system.  The edges $E$ express the vertices in $X$ whose local states interact with each other. 
We say that the graph $(X,E)$ is \emph{locally finite}, 
if $E_x\coloneqq\{e\in E\mid oe=x\}$ is a finite set for any $x\in X$. 
Here $oe\in X$ is the origin of the edge $e$.
In what follows, we will assume that the graph $(X,E)$ is connected and locally finite.

\begin{definition}
	We define the \emph{configuration space} of states $S$ on $(X,E)$ by $S^X\coloneqq\prod_{x\in X}S$.
\end{definition}

We call any element $\e=(\e_x)\in S^X$ a \emph{configuration}.
The configuration space expresses all of the possible configuration of states that our model may take.
The configurations will be quantified via the observables, i.e.\ functions $f\colon S^X\rightarrow\R$ on the
configuration space.  
We say that $f\colon S^X\rightarrow\R$ is a \emph{local function}, if there exists a finite $\La\subset X$
such that the value $f(\e)$ depends only on the states on $\La$, i.e.\
$(\e_x)_{x \in\La}\in S^\La$.  Such function corresponds to a function in $C(S^\La)$,
where $C(S^\La)\coloneqq\Map(S^\La,\R)$ denotes the space of real valued functions on $S^\La$.
If we let $\sI$ be the set of finite subsets $\La\subset X$, then 
the space of local functions is given as $C_\loc(S^X)=\bigcup_{\La\in\sI}C(S^\La)$.

Fix $*\in S$, which we call the base state, and let $S^X_*$ be the subset of $S^X$ consisting of configurations
whose components are $*$ for all but finite $x\in X$.
We denote by $\star$ the configuration in $S^X$ whose components are
all at base state.  Then $\star\in S^X_*$.
For $\La\subset X$, we have $C(S^\La)\subset C(S^X_*)$.
For any function $f\in C(S^X_*)$ and  $\La\subset X$,
let $\iota^\La_* f\in C(S^\La)$ be the function
\[
	\iota^\La_* f(\e)\coloneqq f(\e|_\La), \quad\forall\e\in S^X_*,
\]
where $\e|_\La\in S^X_*$ is defined as the configuration whose component for $x\in \La$ coincides
with that of $\e$, and is at base state $*$ for components outside $\La$.
This defines a linear operator $\iota^\La_*\colon C(S^X_*)\rightarrow C(S^\La)\subset C(S^X_*)$.
In particular, if $\La$ is finite, then $\iota^\La_* f$ is a local function.
We define the space of \emph{local functions with exact support $\La$} by
\[	
	C_\La(S^X_*)\coloneqq\bigl\{f\in C(S^\La)\mid \text{$\forall$ finite $\La'\subset X$ such that 
	$\La\not\subset \La'$, we have $\iota^{\La'}_*f=0$}\bigr\}.
\]
A local function $f\in C(S^\La)$ is in $C_\La(S^X_*)$ if and only if $f(\e)=0$ for any 
configuration $\e=(\e_x)\in S^\La$ satisfying $\e_x=*$ for some $x\in\La$.
For any $\e=(\e_x)\in S^X$, we define the support of $\e$ by $\Supp(\e)\coloneqq\{x\in X\mid \e_x\neq*\}$.

\begin{lemma}\label{lem: sum}
	Suppose  $(f^*_{\La})$ is a system of functions such that $f^*_\La\in C_{\La}(S^X_*)$
	for any $\La\in\sI$.  Then $f=\sum_{\La\in\sI}f^*_\La$ gives a well-defined function on $S^X_*$.
\end{lemma}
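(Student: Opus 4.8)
The plan is to read ``well-defined'' as pointwise finiteness. Since $\sI$ is in general infinite, the expression $\sum_{\La\in\sI}f^*_\La$ is an a priori infinite sum, so to give it meaning as a function on $S^X_*$ I must show that for each fixed configuration $\e\in S^X_*$, all but finitely many of the terms $f^*_\La(\e)$ vanish; the value $f(\e)$ is then the resulting finite sum.

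The key input is the characterization stated just before the lemma: a function $g\in C(S^\La)$ lies in $C_\La(S^X_*)$ if and only if $g(\e)=0$ whenever $\e_x=*$ for some $x\in\La$. First I would fix $\e\in S^X_*$ and recall that, by definition of $S^X_*$, the support $\Supp(\e)=\{x\in X\mid\e_x\neq*\}$ is a finite set. Then for any $\La\in\sI$ with $\La\not\subset\Supp(\e)$ there exists $x\in\La$ with $\e_x=*$, so the characterization applied to $f^*_\La\in C_\La(S^X_*)$ forces $f^*_\La(\e)=0$. Equivalently, $f^*_\La(\e)\neq 0$ can occur only when $\La\subset\Supp(\e)$.

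Since $\Supp(\e)$ is finite, the indices $\La\in\sI$ contributing a nonzero term are precisely the subsets of the finite set $\Supp(\e)$, of which there are finitely many. Hence $\sum_{\La\in\sI}f^*_\La(\e)$ has only finitely many nonzero summands, so it is a finite sum of real numbers and determines a genuine value $f(\e)\in\R$. As $\e\in S^X_*$ was arbitrary, this produces a well-defined function $f\colon S^X_*\rightarrow\R$, as claimed.

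I do not anticipate a serious obstacle: the only conceptual move is recognizing that the appropriate meaning of ``well-defined'' is pointwise finiteness, and then tying the exact-support vanishing condition to the finiteness of $\Supp(\e)$. The one point to state carefully is that each $f^*_\La\in C(S^\La)$ is regarded as a function on $S^X_*$ through the inclusion $C(S^\La)\subset C(S^X_*)$, so that $f^*_\La(\e)$ denotes evaluation on the restriction $(\e_x)_{x\in\La}$; the vanishing characterization applies to exactly this evaluation, which is what makes the argument go through.
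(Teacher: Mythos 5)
Your proof is correct and follows essentially the same route as the paper: both arguments observe that $f^*_\La(\e)$ can be nonzero only when $\La\subset\Supp(\e)$, and conclude from finiteness of $\Supp(\e)$ that the sum is pointwise finite. If anything, your phrasing is slightly more careful than the paper's (which asserts an ``if and only if'' where only the forward implication is needed or guaranteed), and your explicit remark that $f^*_\La\in C(S^\La)$ is evaluated on $\e\in S^X_*$ via the inclusion $C(S^\La)\subset C(S^X_*)$ correctly pins down the one point where the exact-support characterization is applied.
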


\begin{proof}
	For any $\e\in S^X_*$,  we have $f^*_\La(\e)\neq0$ if and only if $\La\subset\Supp(\e)$.
	Since the support $\Supp(\e)$ is finite, 
	if we are given functions $f^*_\La\in C_\La(S^X_*)$ for any $\La\in\sI$, then
	we see that
	\[
		f(\e)=\sum_{\La\in\sI}f^*_{\La}(\e)=
		\sum_{\La\subset\Supp(\e)}f^*_{\La}(\e)
	\] 
	is a well-defined finite sum.  Hence the right hand side of \cref{eq: expansion} 
	defines a function on $S^X_*$.
\end{proof}

\begin{proposition}\label{prop: expansion}
	For any function $f\in C(S^X_*)$, there exists a unique system of functions  $(f^*_{\La})$ in $C_{\La}(S^X_*)$
	for any $\La\in\sI$ such that 
	\begin{equation}\label{eq: expansion}
		 f=\sum_{\La\in\sI}f^*_{\La}
	\end{equation}
	as a function on $S^X_*$.
\end{proposition}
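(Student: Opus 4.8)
The plan is to construct the system $(f^*_\La)$ explicitly by Möbius inversion over the poset $\sI$ of finite subsets of $X$ ordered by inclusion, using the restriction operators $\iota^{\La'}_*$ as building blocks, and then to verify both the identity \cref{eq: expansion} and uniqueness. For each $\La\in\sI$, I would set
\[
	f^*_\La \coloneqq \sum_{\La'\subseteq\La}(-1)^{\abs{\La}-\abs{\La'}}\,\iota^{\La'}_* f.
\]
Since this is a finite linear combination of the local functions $\iota^{\La'}_* f\in C(S^\La)$, it already belongs to $C(S^\La)$, so the only substantive point at this stage is to show that it lands in the subspace $C_\La(S^X_*)$.

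To check $f^*_\La\in C_\La(S^X_*)$, I would invoke the characterization stated just before \cref{lem: sum}: it suffices to show $f^*_\La(\e)=0$ for every $\e\in S^\La$ with $\e_{x_0}=*$ for some $x_0\in\La$. The key observation is that when $\e_{x_0}=*$, the restriction $\e|_{\La'}$ is unaffected by adding or removing $x_0$ from $\La'$, so $\iota^{\La'}_* f(\e)=\iota^{\La'\triangle\{x_0\}}_* f(\e)$. Pairing each $\La'\subseteq\La$ with $\La'\triangle\{x_0\}$ thus matches terms of equal value but opposite sign, and the alternating sum vanishes.

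For the identity, note first that since each $f^*_\La\in C_\La(S^X_*)$, \cref{lem: sum} guarantees that $\sum_{\La\in\sI}f^*_\La$ is a well-defined function on $S^X_*$ and that, at a fixed $\e$, only the terms with $\La\subseteq\Supp(\e)$ contribute. Writing $\Sigma\coloneqq\Supp(\e)$ and interchanging the two finite sums, I would obtain
\[
	\sum_{\La\subseteq\Sigma}f^*_\La(\e)
	=\sum_{\La'\subseteq\Sigma}\Bigl(\sum_{\La'\subseteq\La\subseteq\Sigma}(-1)^{\abs{\La}-\abs{\La'}}\Bigr)\iota^{\La'}_* f(\e).
\]
Substituting $T=\La\setminus\La'$, the inner sum becomes $\sum_{T\subseteq\Sigma\setminus\La'}(-1)^{\abs{T}}$, which is $0$ unless $\La'=\Sigma$ and equals $1$ in that case; hence the right-hand side collapses to $\iota^{\Sigma}_* f(\e)=f(\e|_\Sigma)=f(\e)$, using $\e|_\Sigma=\e$.

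For uniqueness, I would argue that $\sum_{\La\in\sI}g_\La=0$ with $g_\La\in C_\La(S^X_*)$ forces every $g_\La=0$: otherwise choose $\La_0$ minimal with $g_{\La_0}\neq0$, pick $\e$ with $g_{\La_0}(\e)\neq0$, and replace $\e$ by $\e|_{\La_0}$ (which leaves $g_{\La_0}$ unchanged but forces $\Supp(\e)=\La_0$); then every term $g_\La(\e)$ with $\La\neq\La_0$ vanishes—either because $\La\subsetneq\La_0$ is excluded by minimality or because $\La\not\subseteq\La_0=\Supp(\e)$—leaving $0=g_{\La_0}(\e)\neq0$, a contradiction. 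The computations here are all elementary, so I expect care to be needed only in two places: ensuring the infinite sum is legitimate, which is exactly what \cref{lem: sum} provides, and executing the two cancellations cleanly. Both are instances of the standard identity $\sum_{T\subseteq U}(-1)^{\abs{T}}=0$ for $U\neq\emptyset$, which is the real engine behind the membership $f^*_\La\in C_\La(S^X_*)$ and the telescoping that recovers $f$.
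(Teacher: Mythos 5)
Your proposal is correct, but it takes a genuinely different route from the paper's. The paper proceeds recursively: it first derives the constraint $\iota^{\La}_* f=\sum_{\La''\subset\La}f^*_{\La''}$ (\cref{eq: induction}) that any expansion must satisfy, then defines $f^*_\La\coloneqq\iota_*^\La f-\sum_{\La''\subsetneq\La}f^*_{\La''}$ by induction on $\abs{\La}$, verifies $f^*_\La\in C_\La(S^X_*)$ structurally via the induction hypothesis applied to $\La\cap\La'$, and obtains uniqueness as a by-product of the forced recursion. Your closed form $f^*_\La=\sum_{\La'\subseteq\La}(-1)^{\abs{\La}-\abs{\La'}}\iota^{\La'}_* f$ is precisely the M\"obius inversion of that recursion over the Boolean lattice of subsets of $\La$, so by uniqueness both constructions yield the same system; what differs is the method of verification, which in your case is computational rather than inductive: membership via the pairing $\La'\leftrightarrow\La'\triangle\{x_0\}$, recovery of $f$ via interchanging finite sums and the identity $\sum_{T\subseteq U}(-1)^{\abs{T}}=0$ for $U\neq\emptyset$, and uniqueness via a minimal-support argument logically independent of the construction. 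Your route buys an explicit formula for $f^*_\La$ in terms of values of $f$ (convenient if one later wants quantitative control of the expansion) and in fact proves something slightly stronger than the stated uniqueness, namely that the subspaces $C_\La(S^X_*)$ are linearly independent in $C(S^X_*)$: any system $(g_\La)$ with $g_\La\in C_\La(S^X_*)$ summing to zero vanishes termwise. The paper's route buys the restriction identity \cref{eq: induction} as an explicit intermediate output, which is what is actually reused later (e.g.\ in the proof of \cref{lem: single}), and avoids sign bookkeeping. One small point you should make explicit: after replacing $\e$ by $\e|_{\La_0}$ you assert $\Supp(\e)=\La_0$, which needs the observation that $g_{\La_0}(\e)\neq0$ together with $g_{\La_0}\in C_{\La_0}(S^X_*)$ already forces $\e_x\neq *$ for every $x\in\La_0$, by the very characterization of $C_{\La_0}(S^X_*)$ you quote; with that sentence added, the argument is complete.
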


\begin{proof}
	If such $(f^*_{\La})$ satisfying \eqref{eq: expansion} exists, 
	then by definition of 
	$C_{\La}(S^X_*)$, we see that 
	\begin{equation}\label{eq: induction}
	\iota^{\La}_* f=\sum_{\La''\subset\La}f^*_{\La''}
	\end{equation}
	for any $\La\in\sI$.
	We will construct $f^*_\La$ satisfying \eqref{eq: induction}
	by induction on the number of elements in $\La\in\sI$.
	If $\La=\emptyset$, then we let $f^*_\emptyset \coloneqq\iota^\emptyset_*f = f(\star)$.
	For any $\La\in\sI$, assume that there exists $f^*_{\La''}$ 
	for any $\La''\subsetneq\La$
	such that $\iota^{\La'}_* f=\sum_{\La''\subset\La'}f^*_{\La''}$  is satisfied for any proper subset $\La'\subset\La$.
	We let
	\[
		f^*_\La\coloneqq\iota_*^\La f - \sum_{\La''\subsetneq\La}f^*_{\La''}.
	\]
	Then for any $\La'\in\sI$ such that $\La\not\subset\La'$, we have $\La\cap\La'\subsetneq\La$.
	Hence
	\[
			\iota^{\La'}_*f^*_\La=\iota^{\La'}_*(\iota_*^{\La} f) - \sum_{\La''\subsetneq\La}\iota_*^{\La'}f^*_{\La''}
			=\iota_*^{\La\cap\La'} f - \sum_{\La''\subset\La\cap\La'}\iota_*^{\La'}f^*_{\La''}=0,
	\]
	where the
	second equality follows from the definition of $\iota^{\La'}_*$ and the fact that $f^*_{\La''}\in C_{\La''}(S^X)$ and
	the last equality follows from the induction hypothesis for $\La\cap\La'\subsetneq\La$.
	This shows that $f^*_\La\in C_\La(S^X)$, and that 
	$\iota^\La_*f=\sum_{\La''\subset\La}f^*_{\La''}$.  
	This gives a construction of a system $(f^*_\La)$ satisfying \eqref{eq: induction}.
	The uniqueness of $f^*_\La$ follows from the construction.
	By \cref{lem: sum}, the sum $\sum_{\La\in\sI}f^*_\La$ defines a well-defined function on $S^X_*$.
	\cref{eq: expansion} 
	follows from the fact that for any
	$\e\in S^X_*$, if $\Supp(\e)\subset\La$, then we have $f(\e)=\iota^\La_*f(\e)=
	\sum_{\La''\subset\La}f^*_{\La''}(\e)=
	\sum_{\La''\in\sI}f^*_{\La''}(\e)$ as desired.
\end{proof}

We may use \cref{prop: expansion} to define uniform functions in $C(S^X_*)$.
For any $x,y\in X$, we let $d_X(x,y)$ denote the graph distance of $x$ to $y$,
i.e. the length of the shortest path from $x$ to $y$, with the convention that $d_X(x,y)=0$
if $x=y$. 
For any $\La\subset X$, let $\diam(\La)\coloneqq\sup_{x,y\in\La}d_X(x,y)$.

\begin{definition}\label{def: uniform}
	We say that a function $f\in C(S^X_*)$ is \emph{uniform}, if 
	for the expansion of \cref{prop: expansion},
	there exists $R>0$ such that
	$f^*_\La=0$ if $\diam(\La)>R$.
\end{definition}

We denote by $C_\unif(S^X_*)$ the space of uniform functions.
A priori, the uniform functions would seem to depend on the choice
of the base state $*\in S$.
However, we may prove that there exists a canonical isomorphism between uniform 
functions for distinct base states $*$ and $*'$ as follows.
Let $\sI_R$ be the subset of $\sI$ consisting of finite $\La\subset X$ such that $\diam(\La)\leq R$.

\begin{proposition}\label{prop: independent}
	Let $*,*'\in S$ be any two states of $S$.
	Then we have an $\R$-linear isomorphism
	\[
		C_\unif(S^X_*)\cong C_\unif(S^X_\sd)
	\]
	which gives the map $f\mapsto f+(f(\star)-f(\star'))$ on the subspace of local functions $C_\loc(S^X)$.
\end{proposition}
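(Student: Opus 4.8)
The plan is to show that the increments of a uniform function are intrinsic (basepoint independent) and to transport $f$ across basepoints by that increment datum, with a single additive constant fixing the normalization. First I would note that although $f\in C_\unif(S^X_*)$ is only a function on $S^X_*$, its components $f^*_\La\in C_\La(S^X_*)\subseteq C(S^\La)$ from \cref{prop: expansion} are genuine functions of the coordinates in $\La$, so $f^*_\La(\e)$ makes sense for every $\e\in S^X$. For $\e,\e'\in S^X$ agreeing outside a finite set $D$ I set
\[
	\delta_f(\e,\e')\coloneqq\sum_{\La\in\sI}\bigl(f^*_\La(\e')-f^*_\La(\e)\bigr),
\]
writing $\e^{x\arr s}$ for $\e$ with its value at $x$ changed to $s$. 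A term is nonzero only when $f^*_\La\neq0$ (so $\diam(\La)\le R$) and $\e|_\La\neq\e'|_\La$ (so $\La\cap D\neq\emptyset$); such $\La$ lie in the $R$-neighborhood of $D$, which is finite since $D$ is finite and $(X,E)$ is locally finite. Hence $\delta_f$ is a finite sum, well defined on all finitely-differing pairs, additive in the sense that $\delta_f(\e,\e')+\delta_f(\e',\e'')=\delta_f(\e,\e'')$, and \emph{$R$-local}: $\delta_f(\e,\e^{x\arr s})$ depends only on the restriction of $\e$ to the ball of radius $R$ about $x$.

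Writing $\star,\star'$ for the constant configurations at $*$ and $*'$, I would then define
\[
	\Theta_{*,\sd}\colon C_\unif(S^X_*)\to C(S^X_\sd),\qquad (\Theta_{*,\sd}f)(\e)\coloneqq f(\star)+\delta_f(\star',\e),
\]
which is legitimate because $\star'$ and $\e\in S^X_\sd$ differ on the finite set $\Supp_{\sd}(\e)$. Linearity is immediate from $\delta_{f+h}=\delta_f+\delta_h$ and linearity of $f\mapsto f(\star)$. For a local $f$ the increment is the genuine difference $\delta_f(\star',\e)=f(\e)-f(\star')$, so $\Theta_{*,\sd}f=f+\bigl(f(\star)-f(\star')\bigr)$, recovering the stated formula and in particular keeping $\Theta_{*,\sd}f$ local. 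Note $(\Theta_{*,\sd}f)(\star')=f(\star)$, which is the normalization the change of basepoint is designed to preserve.

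The crux is to prove that $g\coloneqq\Theta_{*,\sd}f$ is uniform with the same radius $R$. Here I would compute the exact-support components $g^{\sd}_\La$ of \cref{prop: expansion} by the standard Möbius inversion on the Boolean lattice, $g^{\sd}_\La(\e)=\sum_{\La''\subseteq\La}(-1)^{|\La\setminus\La''|}g(\e^{\La''})$, where $\e^{\La''}$ equals $\e$ on $\La''$ and $\star'$ elsewhere. Substituting $g(\e^{\La''})=f(\star)+\sum_M\bigl(f^*_M(\e^{\La''})-f^*_M(\star')\bigr)$ and using that, for $\La\neq\emptyset$, every summand independent of $\La''$ is annihilated by $\sum_{\La''\subseteq\La}(-1)^{|\La\setminus\La''|}$, one is left with $\sum_M\sum_{\La''\subseteq\La}(-1)^{|\La\setminus\La''|}f^*_M(\e^{\La''})$. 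Since $f^*_M(\e^{\La''})$ depends on $\La''$ only through $\La''\cap M$, summing over the part of $\La''$ inside $\La\setminus M$ produces a factor $(1-1)^{|\La\setminus M|}$, which vanishes unless $\La\subseteq M$. Thus only the finitely many $M$ with $f^*_M\neq0$ and $M\supseteq\La$ survive, forcing $\diam(\La)\le\diam(M)\le R$; equivalently $g^{\sd}_\La=0$ whenever $\diam(\La)>R$. I expect this inclusion–exclusion computation to be the main obstacle, since it is precisely where boundedness of the diameter must be shown to survive the change of basepoint.

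Finally I would exhibit $\Theta_{\sd,*}$ as a two-sided inverse. Both $\delta_f$ and $\delta_g$ are additive and $R$-local (the latter by the previous paragraph), hence each is determined by its values on single-site changes, and each such value depends only on the configuration in a radius-$R$ ball. On configurations in $S^X_\sd$ one has $\delta_g(\e,\e')=g(\e')-g(\e)=\delta_f(\star',\e')-\delta_f(\star',\e)=\delta_f(\e,\e')$ directly from the definition of $g$; and by $R$-locality any single-site change $(\e,\e^{x\arr s})$ may be represented by configurations in $S^X_\sd$ (replace $\e$ by the configuration equal to $\e$ on the radius-$R$ ball about $x$ and to $*'$ elsewhere), so $\delta_g=\delta_f$ on all of $S^X$. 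Combined with $(\Theta_{*,\sd}f)(\star')=f(\star)$, this yields, for $\eta\in S^X_*$, $(\Theta_{\sd,*}g)(\eta)=g(\star')+\delta_g(\star,\eta)=f(\star)+\delta_f(\star,\eta)=f(\eta)$, so $\Theta_{\sd,*}\circ\Theta_{*,\sd}=\id$; the symmetric computation gives the other composite, proving the claimed isomorphism.
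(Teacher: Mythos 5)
Your proof is correct, but it reaches the isomorphism by a genuinely different route than the paper. The paper works entirely at the level of the expansion of \cref{prop: expansion}: it re-expands each exact-support component with respect to the new base state, $f^*_\La=\sum_{\La'\subset\La}(f^*_\La)^\sd_{\La'}$, and regroups via \eqref{eq:sd}, so that uniformity with the same radius $R$ is immediate from $\La'\subset\La$; the price is paid at the inverse, which the paper verifies by an explicit double re-expansion through a third base state $*''$, specialized to $*''=*$. You instead transport $f$ by its increment cocycle $\delta_f$ --- in effect proving \cref{lem: difference} ahead of schedule --- which makes linearity, the normalization $(\Theta_{*,\sd}f)(\star')=f(\star)$, the action on local functions, and the two-sided inverse (via $\delta_g=\delta_f$, reduced by additivity and $R$-locality to single-site changes represented inside $S^X_\sd$) essentially formal; the price is paid at uniformity of the transported function, which you settle by M\"obius inversion on the Boolean lattice, where the factor $(1-1)^{\abs{\La\setminus M}}$ kills every $M\not\supseteq\La$ and forces $\diam(\La)\leq\diam(M)\leq R$. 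The two constructions in fact produce the same map: summing the identity $f^*_M(\e)-f^*_M(\star')=\sum_{\emptyset\neq\La'\subset M}(f^*_M)^\sd_{\La'}(\e)$ over $M$ recovers the paper's $u'$. Two small points to tighten. First, before splitting off the $\La''$-independent terms in your alternating sum, restrict the $M$-sum to the fixed finite family $\{M\in\sI_R\mid M\cap\La\neq\emptyset,\ f^*_M\neq 0\}$, which is permissible since $\e^{\La''}$ agrees with $\star'$ off $\La''\subset\La$; this matters because the separated sums such as $\sum_M f^*_M(\star')$ need not converge on their own (e.g.\ $S=\{0,1\}$, $*=0$, $*'=1$, $f^*_{\{x\}}(s)=s$ for all $x$), so only the grouped differences are finite and the exchange must be performed over a common finite index set. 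Second, with the paper's convention $B(x,R)=\{y\in X\mid d_X(x,y)<R\}$, a set $\La$ containing $x$ with $\diam(\La)\leq R$ lies in $B(x,R+1)$ rather than $B(x,R)$, so your locality radius should be $R+1$; this is harmless, as any finite ball suffices for your truncation argument.
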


\begin{proof}
	By \cref{prop: expansion} and the definition of uniform functions, 
	any $f\in C_\unif(S^X_*)$ has a decomposition
	\[
		f=\sum_{\La\in\sI_R}f^*_\La
	\]
	for some $R>0$,
	where $f^*_\La\in C_\La(S^X_*)\subset C(S^\La)$.   Since $C(S^\La)\subset C(S^X_\sd)$,
	we have a decomposition
	\[
		f^*_\La=\sum_{\La'\subset\La}(f^*_{\La})^\sd_{\La'},
	\]
	where $(f^*_{\La})^\sd_{\La'}\in C_{\La'}(S^X_\sd)$.  
	We define $f^\sd_{\La'}$ for $\La'\in\sI_R$ by
	$f^\sd_\emptyset\coloneqq f^*_\emptyset=f(\star)$
	for $\La'=\emptyset$ and
	for $\La'\neq\emptyset$, 
	we let
	\begin{equation}\label{eq:sd}
		f^\sd_{\La'}\coloneqq\sum_{\substack{\La\in \sI_R\\\La'\subset\La}}(f^*_{\La})^\sd_{\La'}.
	\end{equation}
	The sum is a finite sum since $\La'\neq\emptyset$ is finite and the sum is over $\La\in\sI_R$.
	Hence $f^\sd_{\La'}\in C_{\La'}(S^X_\sd)$ for any $\La'\in\sI$.	
	By \cref{lem: sum} and the definition of uniform functions, the infinite sum
	\begin{equation}\label{eq: A}
		u'(f)\coloneqq\sum_{\La'\in\sI_R}f^\sd_{\La'}
	\end{equation}
	defines a function in $C_\unif(S^X_\sd)$.  The correspondence $f\mapsto u'(f)$ gives 
	an $\R$-linear map
	\[
		u'\colon C_\unif(S^X_*)\rightarrow C_\unif(S^X_\sd)
	\]
	which satisfies $u'(f)(\star')=f(\star)$.
	By construction, we have $u'(f)=f+(f(\star)-f(\star'))$ on the space of local functions.

	We next define the inverse in a similar manner.  
	Suppose $*''\in S$ is another base state.
	If $f'$ is of the form $f'\coloneqq u'(f)=\sum_{\La'\in\sI_R}f^\sd_{\La'}$ 
	of \eqref{eq: A}, then for $\La'\in\sI_R$ and $\La''\in\sI_R$, we have
	\[
		(f^\sd_{\La'})^\sdd_{\La''}
		=
		\Bigl(\sum_{\substack{\La\in \sI_R\\\La'\subset\La}}(f^*_{\La})^\sd_{\La'}\Bigr)^\sdd_{\La''}
		=
		\sum_{\substack{\La\in \sI_R\\\La'\subset\La}}((f^*_{\La})^\sd_{\La'})^\sdd_{\La''}.
	\]
	We define $f^{*''}_{\La''}$ as in \eqref{eq:sd},
	so that $f^{*''}_\emptyset=f^{*'}_\emptyset=f^*_\emptyset=f(\star)$,
	and for $\La''\neq\emptyset$, we let
	\begin{align*}
		f^{*''}_{\La''}\coloneqq\sum_{\substack{\La'\in\sI_R\\\La''\subset\La'}}(f^\sd_{\La'})^\sdd_{\La''}
		&=
		\sum_{\substack{\La'\in\sI_R\\\La''\subset\La'}}
		\sum_{\substack{\La\in \sI_R\\\La'\subset\La}}((f^*_{\La})^\sd_{\La'})^\sdd_{\La''}
		=
		\sum_{\substack{\La\in \sI_R}}
		\sum_{\substack{\La'\in\sI_R\\\La''\subset\La'\subset\La}}
		((f^*_{\La})^\sd_{\La'})^\sdd_{\La''}
		=
		\sum_{\substack{\La\in \sI_R\\\La''\subset\La}}
		(f^*_{\La})^\sdd_{\La''}.
	\end{align*}
	Then by \eqref{eq: A}, we have
	\[
		u''(u'(f))\coloneqq\sum_{\La''\in\sI_R}f^{*''}_{\La''}
		=\sum_{\La''\in\sI_R}\sum_{\substack{\La\in\sI_R\\\La''\subset\La}}
		(f^*_{\La})^\sdd_{\La''},
	\]
	In particular, if $*''=*$, noting that $(f^*_{\La})^*_{\La''}=f^*_\La$ if $\La''=\La$ 
	and $(f^*_{\La})^*_{\La''}=0$ if $\La''\neq\La$, we have
	\[
		u''(u'(f))
		=\sum_{\La''\in\sI_R}\sum_{\substack{\La\in\sI_R\\\La''\subset\La}}
		(f^*_{\La})^*_{\La''}
		=\sum_{\La\in\sI_R}f^*_\La=f.
	\]
	This shows that $u''\circ u'=\id$ as desired.
	Reversing the roles of $*$ and $*'$, we may also prove that  $u'\circ u''=\id$,
	hence $u'$ is an isomorphism as desired.
\end{proof}

We will view uniform functions for different base points through the
canonical isomorphisms given by \cref{prop: independent}.
The isomorphism is given as a translation by constant functions on the 
space of local functions.  For this reason, it is convenient to consider the
space of functions modulo the subspace of constant functions.
Let $C^0_\loc(S^X)$ and $C^0_\unif(S^X_*)$ be the quotients of 
$C_\loc(S^X)$ and $C_\unif(S^X_*)$ modulo the subspace of constant functions.
Then \cref{prop: independent} gives the following.

\begin{corollary}\label{cor: independent}
	Let $*,*'\in S$ be any two states of $S$.
	Then the $\R$-linear isomorphism of \cref{prop: independent} induces an isomorphism
	\[
		C^0_\unif(S^X_*)\cong C^0_\unif(S^X_\sd)
	\]
	which is the identity on the subspace of local functions $C^0_\loc(S^X)$.
\end{corollary}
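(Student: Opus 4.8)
The plan is to show that the isomorphism $u'\colon C_\unif(S^X_*)\to C_\unif(S^X_\sd)$ built in \cref{prop: independent} preserves the subspace of constant functions, so that it passes to the quotients by that subspace. First I would note that every constant function $c\in\R$ is a local function, namely an element of $C(S^\emptyset)$, so the explicit description $u'(f)=f+(f(\star)-f(\star'))$ on local functions given by \cref{prop: independent} applies to it. Since a constant function takes the value $c$ at every configuration, in particular $c(\star)=c(\star')=c$, and hence $u'(c)=c+(c-c)=c$. Thus $u'$ restricts to the identity on the (one-dimensional) subspace of constant functions, which sits inside both $C_\unif(S^X_*)$ and $C_\unif(S^X_\sd)$.

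Because $u'$ carries the constant subspace of the source onto the constant subspace of the target, it induces a well-defined $\R$-linear map on the quotients,
\[
	\bar u'\colon C^0_\unif(S^X_*)\to C^0_\unif(S^X_\sd).
\]
Applying the same observation to the inverse map $u''\colon C_\unif(S^X_\sd)\to C_\unif(S^X_*)$ of \cref{prop: independent}, I would conclude that $u''$ likewise fixes constants and descends to a map $\bar u''$ in the opposite direction. The identities $u''\circ u'=\id$ and $u'\circ u''=\id$ from \cref{prop: independent} then descend to $\bar u''\circ\bar u'=\id$ and $\bar u'\circ\bar u''=\id$, so $\bar u'$ is the desired isomorphism.

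It remains to identify $\bar u'$ on the subspace $C^0_\loc(S^X)$. For this I would invoke the formula one last time: for any local function $f$ the element $u'(f)=f+(f(\star)-f(\star'))$ differs from $f$ only by the constant $f(\star)-f(\star')$, so the class of $u'(f)$ in $C^0_\unif(S^X_\sd)$ equals the class of $f$. Hence $\bar u'$ is the identity on $C^0_\loc(S^X)$, as claimed. I do not expect a genuine obstacle here: the statement is a formal consequence of \cref{prop: independent}, and the only point meriting care is confirming that the explicit formula on local functions forces $u'$ to fix constants, which is precisely what makes the descent to the quotient well defined.
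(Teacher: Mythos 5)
Your proof is correct and matches the paper's (implicit) argument: the paper derives this corollary directly from \cref{prop: independent} without further detail, and your write-up simply makes the routine verification explicit — constants are local, the formula $u'(f)=f+(f(\star)-f(\star'))$ fixes them, so $u'$ and $u''$ descend to mutually inverse maps on the quotients, which agree with the identity on $C^0_\loc(S^X)$ since $u'(f)-f$ is constant for local $f$. No gaps; this is exactly the intended formal consequence.
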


Due to \cref{cor: independent}, we will often simply denote the space $C^0_\unif(S^X_*)$
as $C^0_\unif(S^X)$ without specifying the base point $*\in S$.
For any $f\in C^0_\unif(S^X)$, we will call the function in $C^0_\unif(S^X_*)$ 
corresponding to $f$ a \emph{realization of $f$} for the base point $*\in S$.
Similarly,
for any $\La\subset X$,
we let $C^0(S^\La)$ be the quotient of 
$C(S^\La)$ modulo the subspace of constant functions.  For a base state $*\in S$,
any function $f\in C^0(S^\La)$ has a normalized representative $f\in C(S^\La)$ such that $f(\star)=0$.
This normalization depends on the choice of $*\in S$, but the corresponding class
in $C^0(S^\La)$ is independent of such choice.

Next, we show that the sum of a system $(f_x)_{x\in X}$ of uniformly local functions on $S^X$ 
defines a uniform function.
As stated in \S\ref{sec: introduction},
the premise of our model is that the observables of a configuration should \emph{not} depend on the 
entire configuration space, but should depend only on the local states in the proximity of the
point of observation.
Hence in order to express such observables, we introduce the notion of a function local at a vertex $x\in X$. 
Consider the symmetric digraph $(X,E)$.  
The graph distance $d_X$ of $(X,E)$ expresses the proximity of the vertices in $X$.
For any $R>0$ and $x\in X$, we let
\[
	B(x,R)\coloneqq\{ y\in X\mid d_X(x,y)<R\}
\]
be the \emph{ball} with center $x$ and radius $R$.  
We say that a function $f_x\colon S^X\rightarrow\R$ is \emph{local at 
$x$} (with radius $R$), if $f_x\in C(S^{B(x,R)})$ for some $R>0$.
In other words, 
$f_x(\e)$ for $\e=(\e_z)_{z\in X}$ depends only on the coordinates $(\e_z)_{z\in B(x,R)}$.

\begin{lemma}\label{lem: equivalent}
	A function $f\colon S^X\rightarrow\R$ is a 
	local function if and only if it is local at any $x\in X$.
\end{lemma}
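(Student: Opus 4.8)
The plan is to reduce both implications to a single structural fact about the underlying graph: in a connected, locally finite graph $(X,E)$, every ball $B(x,R)$ of finite radius is a finite subset of $X$. I would establish this observation first, since it is the only point at which the hypotheses on $(X,E)$ genuinely enter. To prove it, I would write $B(x,R)$ as the union over integers $n<R$ of the distance spheres $S_n\coloneqq\{y\in X\mid d_X(x,y)=n\}$ and argue by induction on $n$ that each $S_n$ is finite: we have $S_0=\{x\}$, and any $y\in S_{n+1}$ is adjacent to the penultimate vertex of a geodesic from $x$ to $y$, which lies in $S_n$; since local finiteness bounds the number of neighbours of each vertex of $S_n$, finiteness of $S_n$ forces finiteness of $S_{n+1}$. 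Thus $B(x,R)$ is a finite union of finite sets.

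Granting this, the implication \emph{local} $\Rightarrow$ \emph{local at every $x$} is immediate. If $f$ depends only on the coordinates in a finite set $\La$, then for an arbitrary fixed $x\in X$ connectedness ensures $d_X(x,y)<\infty$ for every $y\in\La$, so setting $R\coloneqq 1+\max_{y\in\La}d_X(x,y)$ (and $R\coloneqq 1$ when $\La=\emptyset$) gives $\La\subset B(x,R)$. Hence $f\in C(S^\La)\subset C(S^{B(x,R)})$, i.e.\ $f$ is local at $x$, and since $x$ was arbitrary this holds at every vertex.

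For the converse I would use the hypothesis at just one vertex: if $f$ is local at every $x$, fix any $x_0\in X$, so that $f\in C(S^{B(x_0,R)})$ for some $R>0$; by the structural fact $B(x_0,R)$ is finite, whence $f$ is a local function with witnessing set $\La=B(x_0,R)$. The only real content is therefore the finiteness of balls, and this is exactly where the two hypotheses are consumed: connectedness, so that the distances from $x$ to the points of $\La$ are finite in the forward direction, and local finiteness, so that balls remain finite in the reverse direction. I would accordingly take care to isolate the finiteness of balls as a clean preliminary observation rather than to bury it inside either implication; the implications themselves are then one-line reductions.
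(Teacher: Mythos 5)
Your proposal is correct and follows essentially the same route as the paper: connectedness places any finite witnessing set $\La$ inside a ball $B(x,R)$ for the forward direction, and local finiteness makes $B(x,R)$ finite for the converse (where, as you note, the hypothesis at a single vertex suffices). The only difference is that you prove the finiteness of balls via induction on distance spheres, whereas the paper simply asserts it from local finiteness; isolating that observation is a reasonable stylistic choice but not a different argument.
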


\begin{proof}
	Suppose $f$ is a local function.  Then there exists finite $\La\subset X$ such that $f\in C(S^\La)$.
	Then, since $(X,E)$ is connected, for any $x\in X$,  there exists $R>0$ such that $\La\subset B(x,R)$.
	This shows that $f\in C(S^\La)\subset C(S^{B(x,R)})$, proving that $f$ is local at $x$.
	Conversely, suppose $f$ is local at $x\in X$.  Then there exists $R>0$ such that $f\in C(S^{B(x,R)})$.
	Since $(X,E)$ is locally finite, $B(x,R)$ is a finite set.
	This shows that $f$ is local as desired.
\end{proof}

Although equivalent by \cref{lem: equivalent} to the fact that a function is local,
the notion that a function is local at $x\in X$ allows for the definition of 
the uniformity of the localness as follows.

\begin{definition}
	Consider a system of functions $(f_x)_{x\in X}$ in $C^0(S^X)$.
	We say such system $(f_x)_{x\in X}$ is \emph{uniformly local}, if there exists
	$R>0$ such that $f_x$ is local at $x$ with radius $R$.
\end{definition}

Given a system $(f_x)_{x\in X}$ which is uniformly local,
the total observables of the entire system should be given as the
infinite sum $\sum_{x\in X}f_x$.
Note that $\e=(\e_x)\in S^X_*$ if and only if the support $\Supp(\e)$ is a finite set.
We have the following.

\begin{proposition}\label{prop: sum}
	Let $(f_x)_{x\in X}$ be a uniformly local system of functions in $C^0(S^X)$.
	Then the infinite sum
	\[
		f\coloneqq\sum_{x\in X}f_x
	\]
	defines a uniform function in $C^0_\unif(S^X)$.	
\end{proposition}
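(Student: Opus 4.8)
The plan is to pass to the base-state description of \cref{prop: expansion} and reorganize the double sum so that each diameter-bounded piece becomes a genuinely finite sum. Fix a base state $*\in S$, and for each $x\in X$ choose the representative $f_x\in C(S^{B(x,R)})$ normalized so that $f_x(\star)=0$; this is legitimate because the $f_x$ are given in $C^0(S^X)$, i.e.\ only modulo constants, and this normalization is exactly what tames the troublesome constant term below. Applying \cref{prop: expansion} to each $f_x$ produces the expansion $f_x=\sum_{\La\in\sI}(f_x)^*_\La$ with $(f_x)^*_\La\in C_\La(S^X_*)$. Because $f_x\in C(S^{B(x,R)})$, uniqueness in \cref{prop: expansion} forces $(f_x)^*_\La=0$ unless $\La\subset B(x,R)$, and the normalization $f_x(\star)=0$ gives $(f_x)^*_\emptyset=0$.

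Next I would collect terms by exact support. For each $\La\in\sI$ set
\[
	g_\La\coloneqq\sum_{\substack{x\in X\\\La\subset B(x,R)}}(f_x)^*_\La.
\]
The key point is that this sum is finite for every nonempty $\La$: choosing any $y\in\La$, the condition $\La\subset B(x,R)$ forces $d_X(x,y)<R$, so $x\in B(y,R)$, and $B(y,R)$ is a finite set because $(X,E)$ is locally finite. Hence $g_\La\in C_\La(S^X_*)$, while $g_\emptyset=0$. Moreover, if $g_\La\neq0$ then $(f_x)^*_\La\neq0$ for some $x$ with $\La\subset B(x,R)$, whence $\diam(\La)\leq\diam(B(x,R))<2R$. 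Thus $g_\La=0$ whenever $\diam(\La)>2R$. By \cref{lem: sum} the system $(g_\La)$ defines a function $f\coloneqq\sum_{\La\in\sI}g_\La$ on $S^X_*$, and since $(g_\La)$ is its expansion in the sense of \cref{prop: expansion}, the diameter bound shows $f\in C_\unif(S^X_*)$ by \cref{def: uniform}.

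It remains to verify that $f$ really represents $\sum_{x\in X}f_x$. For $\e\in S^X_*$ the support $\Supp(\e)$ is finite, so only the finitely many $\La\subset\Supp(\e)$ contribute to $f(\e)$, and each $g_\La(\e)$ is itself a finite sum. I would interchange the two finite sums to obtain $f(\e)=\sum_{x}\sum_{\La\subset B(x,R)}(f_x)^*_\La(\e)=\sum_{x}f_x(\e)$, the last inner sum being the expansion of $f_x$. The outer sum is finite because $f_x(\e)\neq0$ forces $B(x,R)\cap\Supp(\e)\neq\emptyset$ (here the normalization $f_x(\star)=0$ is used again), and there are only finitely many such $x$. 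This identifies $f$ with $\sum_{x\in X}f_x$ and exhibits it as a uniform function, giving the required class in $C^0_\unif(S^X)$.

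The main obstacle---and the reason the statement is phrased in $C^0$ rather than $C$---is the constant term: without the normalization $f_x(\star)=0$, the piece $g_\emptyset=\sum_x f_x(\star)$ would be an ill-defined infinite sum, and the whole reorganization would break down. Everything else is bookkeeping, with local finiteness of $(X,E)$ entering twice: once to make each $g_\La$ a finite sum, and once to control the diameters so that uniformity holds.
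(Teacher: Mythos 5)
Your proof is correct and takes essentially the same route as the paper's: normalize each $f_x$ at the base state so $f_x(\star)=0$, expand via \cref{prop: expansion}, collect the exact-support pieces $(f_x)^*_\La$ over the finitely many $x$ with $\La\subset B(x,R)$ (local finiteness), and conclude the expansion vanishes for $\diam(\La)>2R$. The only difference is organizational --- the paper defines $f$ pointwise first and identifies $f^*_\La=\sum_{x\colon\La\subset B(x,R)}(f_x)^*_\La$ by uniqueness of the expansion, whereas you assemble the candidate expansion first and then verify by a finite-sum interchange that it equals $\sum_{x\in X}f_x$ --- which is a cosmetic reordering of the same argument.
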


\begin{proof}
	Since $(f_x)_{x\in X}$  is uniformly local, there exists $R>0$ 
	such that $f_x\in C^0(S^{B(x,R)})$ for any $x\in X$.
	Take a base state $*\in S$, and a normalized representative 
	of $f_x\in C(S^{B(x,R)})$ satisfying $f_x(\star)=0$ for any $x\in X$.
	For $\e=(\e_x)\in S^X_*$,  by definition,
	the support $\Supp(\e)$ is a finite set.
	Since we have assumed that $(X,E)$ is locally finite, the vertices
	$x\in X$ such that $B(x,R)\cap\Supp(\e)\neq\emptyset$
	is also a finite set.  If $B(x,R)\cap\Supp(\e)=\emptyset$,
	then since $f_x\in C\bigl(S^{B(x,R)}\bigr)$, we have
	$f_x(\e)=f_x(\star)=0$.  This shows that $f(\e)=\sum_{x\in X}f_x(\e)$ is a 
	well-defined finite sum, hence defines a function in $C^0(S^X_*)$.
	Let $R>0$ such that $f_x\in C(S^{B(x,R)})$. If we take an expansion of $f_x$, then by \cref{prop: expansion},
	we have
	\[
		f_x=\iota^{B(x,R)}_*f_x=\sum_{\La\subset B(x,R)}(f_x)^*_\La.
	\]
	The uniqueness of the expansion of \cref{prop: expansion} 
	shows that for $f^*_\La$ such that $f=\sum_{\La\in\sI}f^*_\La$,
	we have
	\[
		f^*_\La=\sum_{x\in X, \\ \La\subset B(x,R)}(f_x)^*_\La
	\]
	where the sum is taken over all $x \in X$ such that $ \La\subset B(x,R)$. 
	This shows that $f^*_\La=0$ if $\diam(\La)>2R$, hence $f\in C^0_\unif(S^X)$ as desired.
\end{proof}

We may also prove the converse of \cref{prop: sum}.

\begin{proposition}\label{prop: converse}
	Let $f\in C^0_\unif(S^X)$.  Then there exists a system $(f_x)_{x\in X}$ of uniformly local functions in
	$C^0(S^X)$ such that $f=\sum_{x\in X}f_x$.
\end{proposition}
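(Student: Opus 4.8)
The plan is to reverse the grouping carried out in \cref{prop: sum}: rather than splitting a uniformly local system vertex-by-vertex, I would take the canonical support decomposition of $f$ and reassemble its terms into packets indexed by vertices. First I would fix a base state $*\in S$ and a realization of $f$, still denoted $f$, lying in $C_\unif(S^X_*)$. By \cref{prop: expansion} together with \cref{def: uniform}, there is an $R>0$ and functions $f^*_\La\in C_\La(S^X_*)$ for $\La\in\sI_R$ with
\[
	f=\sum_{\La\in\sI_R}f^*_\La .
\]
Since $(X,E)$ is connected and locally finite it is countable, so I may fix an enumeration of $X$ and the induced well-ordering. For each nonempty $\La\in\sI_R$ set $x(\La)\coloneqq\min\La$. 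The key observation is that $\La\subset B(x(\La),R+1)$: for any $y\in\La$ one has $d_X(x(\La),y)\leq\diam(\La)\leq R<R+1$. Choosing the representative vertex \emph{inside} $\La$ is exactly what keeps each $\La$ within a ball of fixed radius about its assigned vertex.

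Next I would define, for each $x\in X$,
\[
	f_x\coloneqq\sum_{\substack{\La\in\sI_R\\ x(\La)=x}}f^*_\La .
\]
This is a finite sum: every $\La$ with $x(\La)=x$ satisfies $\La\subset B(x,R+1)$, and $B(x,R+1)$ is finite by local finiteness, so only finitely many such $\La$ occur. Moreover each $f^*_\La\in C(S^\La)\subset C(S^{B(x,R+1)})$, whence $f_x\in C(S^{B(x,R+1)})$, i.e.\ $f_x$ is local at $x$ with radius $R+1$. As the radius $R+1$ is independent of $x$, the system $(f_x)_{x\in X}$ is uniformly local, and each $f_x$ determines a class in $C^0(S^X)$.

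It then remains to verify that $\sum_{x\in X}f_x=f$ in $C^0_\unif(S^X)$. By \cref{prop: sum} the left-hand side is a well-defined uniform function. Each nonempty $\La\in\sI_R$ is assigned to exactly one vertex, namely $x(\La)$, so the sets $\{\La:x(\La)=x\}$ partition the nonempty members of $\sI_R$; hence for any $\e\in S^X_*$,
\[
	\sum_{x\in X}f_x(\e)=\sum_{\substack{\La\in\sI_R\\ \La\neq\emptyset}}f^*_\La(\e)=f(\e)-f^*_\emptyset ,
\]
where $f^*_\emptyset=f(\star)$ is a constant function. Thus $\sum_{x\in X}f_x$ and $f$ differ by a constant and coincide in $C^0_\unif(S^X)$, as desired.

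The argument is essentially bookkeeping, and I expect no deep obstacle. The only point demanding care is to guarantee simultaneously the \emph{finiteness} of each packet $f_x$ and the \emph{uniformity} of the localness; both rest on the single observation that a set of diameter at most $R$ lies in the ball of radius $R+1$ about any of its points, together with local finiteness of $(X,E)$, which renders such balls finite. One should also confirm that the whole construction is compatible with passage to $C^0$, which it is, since the only term lost in the regrouping is the constant $f^*_\emptyset$.
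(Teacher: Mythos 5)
Your proof is correct and takes essentially the same route as the paper: both regroup the canonical expansion $f=\sum_{\La\in\sI_R}f^*_\La$ of \cref{prop: expansion} into finitely many packets per vertex and deduce uniform locality from local finiteness of $(X,E)$. The only difference is the assignment rule --- the paper splits each $f^*_\La$ democratically, setting $f_x\coloneqq\sum_{\La\in\sI,\,x\in\La}\frac{1}{|\La|}f^*_\La$ (which needs no enumeration of $X$ and no countability argument), whereas you assign each nonempty $\La$ wholesale to $\min\La$ under a fixed well-ordering and absorb $f^*_\emptyset$ as a constant at the end; both verifications are sound, and your observation that $\La\subset B(x,R+1)$ (rather than $B(x,R)$, given the strict inequality in the paper's definition of the ball) is in fact slightly more careful than the paper's own wording.
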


\begin{proof}
	We take a base state $*\in S$, and take $f\in C^0_\unif(S^X_*)$ normalized so that $f(\star)=0$.
	Then by the definition of uniform functions, there exists $R>0$ such that for the expansion \cref{eq: expansion},  
	$f^*_\La=0$ if $\diam(\La)>R$.  Moreover, $f^*_\emptyset=f(\star)=0$. For each $x \in X$, let 
\[
f_x \coloneqq \sum_{\La\in\sI,\,x \in \La } \frac{1}{|\La|}f^*_\La,
\]
	which is a well-defined finite sum such that $f_x\in C(S^{B(x,R)})$ since $\La\subset B(x,R)$ 
	if $\diam(\La)\leq R$.  We have
	\[
		\sum_{x\in X}f_x=\sum_{x\in X}\sum_{\La\in\sI,\,x \in \La } \frac{1}{|\La|}f^*_\La=\sum_{\La\in\sI}f^*_\La=f
	\]
	as a function on $S^X_*$, which shows that $f=\sum_{x\in X}f_x$ in $C^0_\unif(S^X)$ as desired.
\end{proof}

%
%
%
\section{Configuration Space with Transition Structure}\label{sec: CS}
%
%
%

In this section, we first define the notion of interactions and conserved quantities.  
Then we will consider the configuration space on an underlying graph given by the interaction,
and prove our main theorem.

\begin{definition}\label{def: interaction}
	Let $S$ be a non-empty set.
	We say that $\phi\subset(S\times S)\times (S\times S)$ is an \emph{interaction},
	if $(S\times S,\phi)$ is a symmetric digraph.  
	In other words, we have $(s_1,s_2)\in \phi$ if and only if $(s_2,s_1)\in\phi$. We also refer to the pair $(S,\phi)$ as an interaction,
and the graph $(S\times S,\phi)$ the associated graph.
\end{definition}

The set $S\times S$ represents all of the possible configurations on two adjacent vertices of an underlying graph,
and the associated graph expresses all of the possible transitions
of $S\times S$.

\begin{remark}
	In \cite{BKS20}*{Definition 2.4} and \cite{BS21}*{\S0}, we defined an interaction as a map 
	\[
		\phi_+\colon S\times S\rightarrow S\times S
	\] satisfying
	$
		\phi^\iota_+(s_1,s_2)=(s_1,s_2)
	$
	for any $(s_1,s_2)\in S\times S$ such that $ \phi_+(s_1,s_2)\neq (s_1,s_2)$.
	Here, we let $\phi^\iota_+\coloneqq \iota\circ\phi_+\circ\iota$, where
	$\iota(s_1,s_2)\coloneqq(s_2,s_1)$ for any $(s_1,s_2)\in S\times S$.
	Denote again by the same symbol
	\[
		\phi_+\coloneqq\{((s_1,s_2), \phi_+(s_1,s_2))\mid (s_1,s_2)\in S\times S\}\subset
	(S\times S)\times(S\times S)
	\] 
	the graph of $\phi_+$, 
	and let
	$\phi^\iota_+\coloneqq\{((s'_1,s'_2),\phi^\iota_+(s'_1,s'_2))\mid (s'_1,s'_2)\in S\times S\}$.
	Then the condition on the map $\phi_+$ insures that
	$\phi\coloneqq\phi_+\cup\phi^\iota_+\subset (S\times S)\times (S\times S)$ 
	is a symmtric digraph, which gives an interaction
	in our sense of \cref{def: interaction}.
\end{remark}

Next, we define a conserved quantity for an interaction.

\begin{definition}
	We say that a function $\xi\colon S\rightarrow\R$ is a \emph{conserved quantity} for an interaction $(S,\phi)$, if the function
	$\tilde{\xi}\colon S\times S\rightarrow\R$ given by
	\begin{equation}\label{eq: CQ}
		 \tilde{\xi}(s_1,s_2)
		\coloneqq\xi(s_1)+\xi(s_2)\quad\forall(s_1,s_2)\in S\times S
	\end{equation}
	is constant on the connected components of the associated graph $(S\times S,\phi)$.
\end{definition}

We remark that the constant function is trivially a conserved quantity.
We denote by $\C(S)$ the space of conserved quantities for an interaction $(S,\phi)$, modulo the subspace of constant functions.

An example of an interaction is given by the
multi-species exclusion interaction, which gives a generalization of the exclusion interaction.

\begin{figure}[htbp]
	\begin{center}
		\includegraphics[width=1\linewidth]{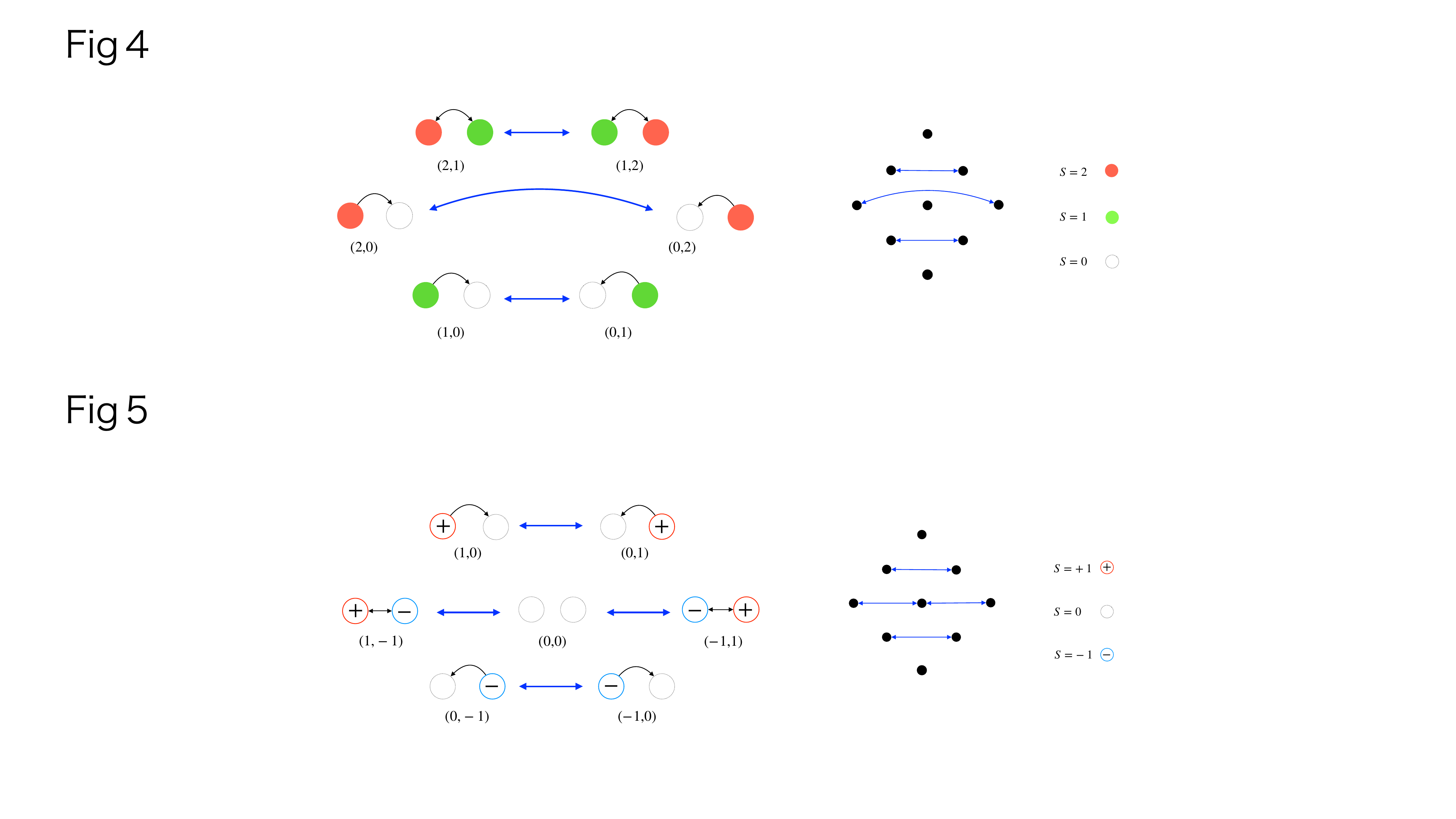}
		\caption{multi-species exclusion interaction for $\kappa=2$}
	\end{center}
\end{figure}

\begin{example}\label{example: 1}
	For an integer $\kappa\geq 1$ and $S_\kappa=\{0,1,\ldots,\kappa\}$,
	we let
	\[
		\phi^\kappa_\ms\coloneqq\{ ((j,k),(k,j))\in S_\kappa\times S_\kappa\mid j,k\in S_\kappa, j\neq k\}.
	\]
	Then the pair $(S_\kappa,\phi^\kappa_\ms)$ is an interaction, which we call the 
	\emph{multi-species exclusion interaction}.
	We have $c_{\phi^\kappa_\ms}=\dim_\R\Consv^{\phi^\kappa_\ms}(S_\kappa)=\kappa$, 
	and we have a basis  $\xi^1,\ldots,\xi^\kappa$ 
	of $\Consv^{\phi^\kappa_\ms}(S_\kappa)$ called the \emph{standard basis}
	defined as
	$\xi^i(0)=0$ and
	$\xi^i(j)=\delta_{ij}$ for any integer $i,j=1,\ldots,\kappa$.
	This interaction underlies the multi-color exclusion process studied by 
	Dermoune-Heinrich \cite{DH08} and Halim--Hac\`ene \cite{HH09}, as well as
	the multi-species exclusion process studied by Nagahata--Sasada \cite{NS11}.
	The case $\kappa=1$ coincides with the exclusion interaction, hence $\phi^1_\ms=\phi_\EX$.
\end{example}

The following two-species exclusion process with annihilation and creation
gives an example of an interaction
which does not come from a map as in \cite{BKS20}.
Compare \cite{BKS20}*{Example 2.10 (4)}.

\begin{figure}[htbp]
	\begin{center}
		\includegraphics[width=1\linewidth]{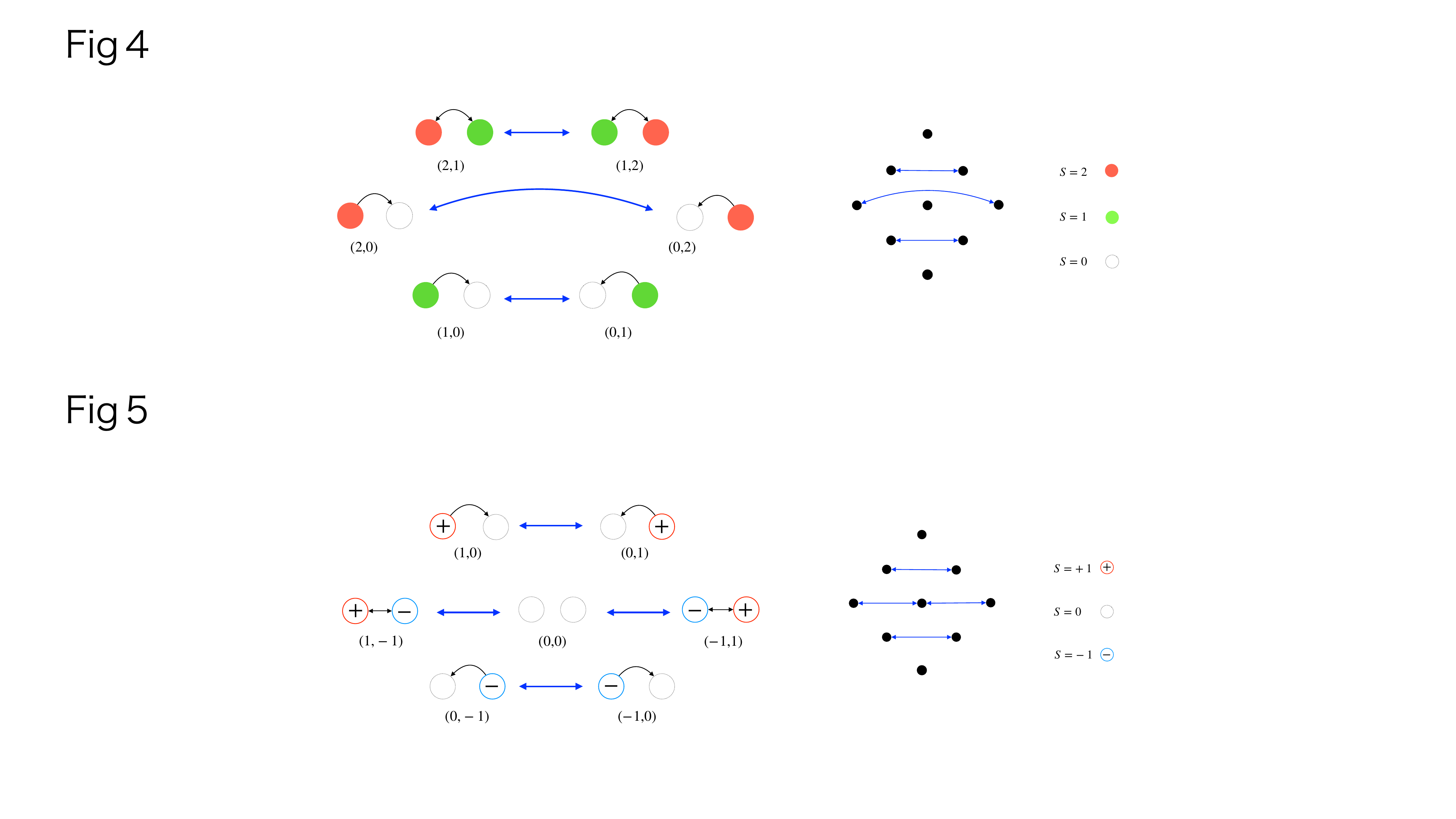}
		\caption{Interaction of \cref{example: 1} underlying the two-species exclusion 
		process with annihilation and creation studied in \cite{Sas10}.}
	\end{center}
\end{figure}

\begin{example}\label{example: 2}
	Let $S=\{-1,0,+1\}$, and we let $\phi\subset(S\times S)\times(S\times S)$ 
	be the interaction given by
	\begin{align*}
		(-1,0)&\leftrightarrow (0,-1), & (1,0)&\leftrightarrow (0,1), 
		& (1,-1)&\leftrightarrow (-1,1), &
		(1,-1)&\leftrightarrow(0,0)\leftrightarrow(-1,1),
	\end{align*}
	where $\leftrightarrow$ denotes the existence of an edge connecting the vertices.
	Then $(S,\phi)$ is an interaction underlying the two-species exclusion 
	process with annihilation and creation studied by Sasada in \cite{Sas10}.	
	We have $c_{\phi}=1$, and $\Consv^\phi(S)$ is spanned by 
	$\xi\colon S\rightarrow\R$ given by $\xi(j)=j$ for $j=-1,0,1$.
\end{example}

Note that the interactions of \cref{example: 1} and \cref{example: 2} are
both exchangeable in the sense of \cref{def: exchangeable}.
We remark that the two color exclusion process studied by Quastel \cite{Qua92} 
is a variant of \cref{example: 1} with $\kappa=2$ removing the edge $(1,2)\leftrightarrow(2,1)$,
hence is not exchangeable.

Now, let $(X,E)$ be a locally finite symmetric digraph.  For any conserved quantity $\xi\in\C(S)$,
if we let $\xi_x(\e)\coloneqq\xi(\e_x)$ for any $\e=(\e_x)\in S^X$, then $\xi_x$ gives a function
which is by definition local at $x$ of radius $0$.
Hence $(\xi_x)_{x\in X}$ is a system which is uniformly local.

\begin{definition}\label{def: CQX}
	For any conserved quantity $\xi\in\C(S)$, we let
	\[
		\xi_X\coloneqq\sum_{x\in X}\xi_x\in C^0_\unif(S^X)
	\]
	the uniform functions associated to the uniformly local system $(\xi_x)_{x\in X}$.
	The correspondence $\xi\mapsto\xi_X$ gives an $\R$-linear homomorphism $\C(S)\hookrightarrow C^0_\unif(S^X)$.
\end{definition}

We next consider the transition structure of an interaction on the configuration space. 
For any $e\in E$, we let $oe,te\in X$ be the origin and target of $e$ so that $e=(oe,te)$.
We define the transition structure $\Phi_E$ of an interaction $(S,\phi)$
on the configuration space $S^X$ as follows.

\begin{definition}\label{def: CSTS}
	We define the \emph{transition structure} $\Phi_E\subset S^X\times S^X$ 
	on the configuration space $S^X$ by
	\[
		\Phi_E\coloneqq\bigl\{(\e,\e')\in S^X\times S^X
		\mid \exists e\in E, \,\,  ((\e_{oe},\e_{te}),(\e'_{oe},\e'_{te}))\in\phi, \,\,\e_x=\e'_x\,\forall x \neq oe,te \bigr\}.
	\]
	Then since $(S\times S,\phi)$ is a symmetric digraph, the configuration space with
	transition structure $(S^X,\Phi_E)$ is also a symmetric digraph.
\end{definition}

The topology of the graph $(S^X,\Phi_E)$ reflects information concerning the large scale interacting system.  
If $\e$ and $\e'$ are in the same connected component of $(S^X,\Phi_E)$,
then this implies that there exists a path from $\e$ to $\e'$ in $(S^X,\Phi_E)$.
In this case, we say that there exists a finite sequence of transitions from $\e$ to $\e'$.
Thus, intuitively, a microscopic stochastic process associated to the interaction $(S,\phi)$ on a connected component of $(S^X,\Phi_E)$ would be irreducible.
Since macroscopically, the microscopic stochastic movements of the local states should be negligible, 
the observables of our microscopic configuration space relevant for the
associated macroscopic dynamics obtained by taking the hydrodynamic limit
should be invariant via transitions of $(S^X,\Phi_E)$.
This is the reason we are interested in functions on $S^X$ which are invariant via transitions.

The uniform functions of \cref{def: uniform} is not in itself a function on $S^X$.
Hence we first define what is meant for a uniform function to be invariant
via transitions of $(S^X,\Phi_E)$.  
For any $f\in C^0_\unif(S^X)$, since $f$ is not generally a function on $S^X$,
the values $f(\e)$ and $f(\e')$ may not be defined for $\e,\e'\in S^X$.  However, 
given a finite sequence of transitions from $\e$ to $\e'$,
we may define a well-defined difference $f(\e')-f(\e)$ as follows.
Due to this property, uniform functions may be understood as a kind of potential
on the configuration space.

\begin{lemma}\label{lem: difference}
	For any uniform function $f\in C^0_\unif(S^X)$ and $\e,\e'\in S^X$ such that
	 $\Delta_{\e,\e'}\coloneqq\{ x\in X\mid \e_x\neq\e'_x\}$ is a finite set,
	then the difference
	\[
		f(\e')-f(\e)
	\]
	extending the difference of local functions gives a well-defined value.
	In particular, given any $\e$ and $\e'$ in the same connected component of $(S^X, \Phi_E)$,
	the difference $f(\e')-f(\e)$ gives a well-defined value.
\end{lemma}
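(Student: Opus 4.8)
The plan is to use the base-point-independent realization of a uniform function together with the expansion of \cref{prop: expansion}. First I would fix a base state $*\in S$ and take the realization $f\in C_\unif(S^X_*)$, which by the definition of uniform functions admits an expansion $f=\sum_{\La\in\sI_R}f^*_\La$ with $f^*_\La\in C_\La(S^X_*)$ for some $R>0$. The key observation is that although $f$ is only literally defined on $S^X_*$ (configurations differing from $\star$ in finitely many coordinates), the difference $f(\e')-f(\e)$ should only require that $\e$ and $\e'$ agree outside a \emph{finite} set $\Delta_{\e,\e'}$, even if both $\e,\e'$ have infinite support. So I would \emph{define} the difference by choosing any finite $\La_0\subset X$ containing $\Delta_{\e,\e'}$ and setting
\[
	f(\e')-f(\e)\coloneqq\sum_{\La\in\sI_R}\bigl(f^*_\La(\e')-f^*_\La(\e)\bigr),
\]
and then show this sum is a finite sum and is independent of all auxiliary choices.

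The main technical step is to see that only finitely many terms in the sum are nonzero. Since $f^*_\La\in C_\La(S^X_*)$ vanishes on any configuration having a component at base state inside $\La$, and since $f^*_\La$ depends only on coordinates in $\La$, the difference $f^*_\La(\e')-f^*_\La(\e)$ is zero whenever $\La\cap\Delta_{\e,\e'}=\emptyset$: on such $\La$ the configurations $\e$ and $\e'$ agree, so $f^*_\La(\e)=f^*_\La(\e')$. Hence the sum ranges only over those $\La\in\sI_R$ with $\La\cap\Delta_{\e,\e'}\neq\emptyset$. Because $\Delta_{\e,\e'}$ is finite, $(X,E)$ is locally finite, and $\diam(\La)\leq R$, there are only finitely many such $\La$, so the sum is a well-defined finite real number. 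I would make the estimate explicit: each such $\La$ is contained in $\bigcup_{x\in\Delta_{\e,\e'}}B(x,R+1)$, a finite set, so there are finitely many admissible $\La$.

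The remaining point is well-definedness, namely independence of the base state $*$ and agreement with the naive difference on local functions. For a local function $f\in C(S^{\La'})$ the proposed formula reduces to $f(\e'|_{\La'})-f(\e|_{\La'})$, which is precisely the honest difference of values once $\La'\subset\La_0$, so the extension is consistent with the difference of local functions. For base-point independence I would invoke \cref{prop: independent}: changing the base state from $*$ to $*'$ replaces $f$ by $f+c$ for a constant $c=f(\star)-f(\star')$ on local functions, and constants cancel in any difference; more carefully, the realizations $f^*$ and $f^{*'}$ are related by the isomorphism $u'$, which on differences of the above form induces the identity, so the value $f(\e')-f(\e)$ is unchanged. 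Since $f\in C^0_\unif(S^X)$ is taken modulo constants, this is automatic at the level of the quotient.

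Finally, for the last sentence of the statement, if $\e$ and $\e'$ lie in the same connected component of $(S^X,\Phi_E)$, then there is a finite path $\e=\e^{(0)},\e^{(1)},\ldots,\e^{(n)}=\e'$ with each $(\e^{(i)},\e^{(i+1)})\in\Phi_E$. Each single transition changes only the two coordinates at the endpoints of an edge, so $\Delta_{\e^{(i)},\e^{(i+1)}}$ is finite for each $i$, and $\Delta_{\e,\e'}\subset\bigcup_i\Delta_{\e^{(i)},\e^{(i+1)}}$ is finite. Thus the hypothesis of the first part is met and the difference $f(\e')-f(\e)$ is well-defined. The main obstacle I anticipate is purely bookkeeping: verifying that the proposed difference does not depend on the choice of finite $\La_0\supset\Delta_{\e,\e'}$ used to organize the computation, and that it is genuinely additive along transition paths so that summing the single-transition differences recovers $f(\e')-f(\e)$; both follow from the finiteness of the support of the expansion terms meeting $\Delta_{\e,\e'}$, but care is needed to present the telescoping cleanly.
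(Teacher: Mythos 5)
Your proposal is correct and follows essentially the same route as the paper's proof: expand $f=\sum_{\La\in\sI_R}f^*_\La$ via \cref{prop: expansion}, observe that $f^*_\La(\e')-f^*_\La(\e)=0$ whenever $\La\cap\Delta_{\e,\e'}=\emptyset$, define the difference as the remaining sum, and deduce finiteness from local finiteness of $(X,E)$ together with $\diam(\La)\leq R$, with the connected-component case reduced to finiteness of $\Delta_{\e,\e'}$ along a finite transition path. In fact you supply two details the paper leaves implicit (the explicit containment of admissible $\La$ in $\bigcup_{x\in\Delta_{\e,\e'}}B(x,R+1)$ and the base-state independence via the isomorphism of \cref{prop: independent}), while your auxiliary set $\La_0$ is harmless but unnecessary, since your defining formula never actually uses it.
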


\begin{proof}
	Consider a base state $*\in S$ and an expansion 
	$f=\sum_{\La\in\sI_R}f^*_\La$ of \cref{prop: expansion} for some $R>0$.
	Then for any $\La\in\sI_R$ such that $\La\cap\Delta_{\e,\e'}=\emptyset$,
	we have $f^*_\La(\e)=f^*_\La(\e')$.  Hence the sum of the difference
	\begin{equation}\label{eq: difference}
		f(\e')-f(\e)\coloneqq
		\sum_{\La\cap\Delta_{\e,\e'}\neq\emptyset}(f^*_\La(\e')-f^*_\La(\e))
	\end{equation}
	gives a well-defined finite sum and is independent of the choice of the base state $*\in S$.
	By definition, this difference extends the difference for local functions.
	The statement for a finite sequence of transitions
	 from $\e$ to $\e'$ follows from the fact that $\La_{\e,\e'}$ is finite
	in this case.
\end{proof}

\begin{definition}
	We say that a uniform function $f$ is \emph{invariant via transitions of $(S^X,\Phi_E)$},
	if for any transition $(\e,\e')\in\Phi_E$,
	the difference $f(\e')-f(\e)=0$.
\end{definition}

Let $f$ be a uniform function which is invariant  via transitions of $(S^X,\Phi_E)$.
Then by definition, the realization $f\in C^0_\unif(S^X_*)$ 
for any base point $*\in S^X$ is a function which is constant on the connected components of 
$(S^X_*,\Phi_E^*)$ where $\Phi_E^*:=\Phi_E \cap (S^X_* \times S^X_*)$.
The uniform functions $\xi_X$ obtained from a conserved quantity $\xi\in\C(S)$
gives an example of a uniform function which is invariant via transitions.

\begin{lemma}\label{lem: CQ}
	For any conserved quantity $\xi\in\C(S)$, the uniform function $\xi_X$ of \cref{def: CQX} is invariant via transitions
	of $(S^X,\Phi_E)$.
\end{lemma}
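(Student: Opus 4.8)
The plan is to compute the difference $\xi_X(\e')-\xi_X(\e)$ directly across a single transition and to recognize it as the difference of the two-point function $\tilde\xi$ evaluated at the two endpoints of an edge of the associated graph $(S\times S,\phi)$. First I would fix a transition $(\e,\e')\in\Phi_E$, which by \cref{def: CSTS} comes equipped with an edge $e\in E$ satisfying $((\e_{oe},\e_{te}),(\e'_{oe},\e'_{te}))\in\phi$ and $\e_x=\e'_x$ for all $x\neq oe,te$. In particular $\Delta_{\e,\e'}\subseteq\{oe,te\}$ is finite, so the difference $\xi_X(\e')-\xi_X(\e)$ is well defined by \cref{lem: difference}, and it suffices to show this value is zero.

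The main computation is to identify the expansion of $\xi_X$. Fixing a base state $*\in S$ and taking the normalized representatives $\xi_x(\e)=\xi(\e_x)-\xi(*)$ so that $\xi_x(\star)=0$, each $\xi_x$ lies in $C(S^{\{x\}})$, hence has expansion components supported only on $\emptyset$ and $\{x\}$, with $(\xi_x)^*_\emptyset=0$ and $(\xi_x)^*_{\{x\}}(\e)=\xi(\e_x)-\xi(*)$. By the formula for the expansion of a sum established in the proof of \cref{prop: sum}, the only nonzero components of $\xi_X$ are the singletons, namely $(\xi_X)^*_{\{y\}}(\e)=\xi(\e_y)-\xi(*)$, while $(\xi_X)^*_\La=0$ whenever $\abs{\La}\neq1$.

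Plugging this into the difference formula \cref{eq: difference} of \cref{lem: difference}, only the singletons $\{y\}$ with $y\in\Delta_{\e,\e'}$ survive, and the constants $\xi(*)$ cancel, giving
\[
  \xi_X(\e')-\xi_X(\e)=\sum_{y\in\Delta_{\e,\e'}}\bigl(\xi(\e'_y)-\xi(\e_y)\bigr)
  =\sum_{y\in\{oe,te\}}\bigl(\xi(\e'_y)-\xi(\e_y)\bigr),
\]
where the last equality only adds vanishing terms for $y\in\{oe,te\}\setminus\Delta_{\e,\e'}$. Regrouping the two remaining summands, this equals $\tilde\xi(\e'_{oe},\e'_{te})-\tilde\xi(\e_{oe},\e_{te})$.

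Finally, since $((\e_{oe},\e_{te}),(\e'_{oe},\e'_{te}))\in\phi$ is an edge of the associated graph $(S\times S,\phi)$, the two points $(\e_{oe},\e_{te})$ and $(\e'_{oe},\e'_{te})$ lie in the same connected component; as $\xi\in\C(S)$ is a conserved quantity, $\tilde\xi$ is constant on that component by \cref{eq: CQ}, so the difference vanishes. Since the transition was arbitrary, this establishes that $\xi_X$ is invariant via transitions of $(S^X,\Phi_E)$. I do not expect a genuine obstacle here: the entire content is the conserved-quantity property of $\xi$, and the only point demanding care is that $\xi_X$ is a uniform function rather than an honest function on $S^X$, so the difference must be interpreted through \cref{lem: difference}; once the singleton expansion of $\xi_X$ is identified, the remainder is routine bookkeeping.
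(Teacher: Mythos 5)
Your proposal is correct and follows essentially the same route as the paper's proof: normalize $\xi$ at a base state so that $\xi_X=\sum_{x\in X}\xi_x$ is itself the canonical expansion into singleton components, reduce the difference across a transition $(\e,\e')\in\Phi_E$ to the two affected sites via \cref{lem: difference}, and conclude from the conserved-quantity identity $\xi(\e_{oe})+\xi(\e_{te})=\xi(\e'_{oe})+\xi(\e'_{te})$ on the edge of $(S\times S,\phi)$. Your version merely spells out the bookkeeping (the vanishing of non-singleton expansion components and the cancellation of the constants $\xi(*)$) that the paper leaves implicit.
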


\begin{proof}
	We select a base state $*\in S$.
	For any $\xi\in\C(S)$,
	we take the normalization $\xi(*)=0$.
	Then $\xi_x\in C_{\{x\}}(S^X_*)$.
	Hence the sum $\xi_X=\sum_{x\in X}\xi_x$ is in fact the expansion \cref{eq: expansion}.
	Suppose $(\e,\e')\in\Phi_E$.
	By the definition of $\Phi_E$, 
	there exists $e=(x,y)\in E$ such that $\e'_z=\e_z$ for $z\neq x,y$
	and $((\e_{x},\e_{y}),(\e'_{x},\e'_{y}))\in\phi$.
	Hence by 
	the definition of a conserved quantity \eqref{eq: CQ}, we have 
	$\xi(\e_{x})+\xi(\e_{y})=\xi(\e'_{x})+\xi(\e'_{y})$.
	This shows that 
	\[
		\xi_X(\e')-\xi_X(\e)=0,
	\]
	hence that $\xi_X$ is invariant via transitions of $(S^X,\Phi_E)$.
\end{proof}

As in \cref{def: exchangeable} of the introduction, we say that an interaction $(S,\phi)$ is \emph{exchangeable}
if and only if 
for any $(s_1,s_2)\in S\times S$,
the configurations
$(s_1,s_2)$ and $(s_2,s_1)$ are in the same connected component of $(S\times S,\phi)$.
We may now state our main theorem.

\begin{theorem}\label{thm: main}
	Suppose $(S,\phi)$ is an interaction which is exchangeable, 
	and let $(X,E)$ be a connected and locally finite symmetric digraph such that $X$ is infinite.
	Let $f\in C^0_\unif(S^X)$ be a uniform function which is invariant via transitions of
	$(S^X,\Phi_E)$.  
	Then there exists a conserved quantity $\xi\colon S\rightarrow\R$ such that $f=\xi_X$.
\end{theorem}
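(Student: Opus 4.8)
The plan is to fix a base state $*\in S$ and work with the realization $f\in C^0_\unif(S^X_*)$ normalized so that $f(\star)=0$, together with its expansion $f=\sum_{\La\in\sI_R}f^*_\La$ from \cref{prop: expansion} for some $R>0$. Since the difference of $f$ across any finite sequence of transitions is well-defined by \cref{lem: difference} and vanishes by hypothesis, the realization $f$ is a genuine function on $S^X_*$ that is constant on the connected components of $(S^X_*,\Phi_E^*)$. The goal is to show that all parts $f^*_\La$ with $|\La|\geq 2$ vanish, that $f^*_\emptyset=0$, and that the surviving single-site parts are governed by a single conserved quantity.

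The key step, and where exchangeability enters, is to promote invariance under single transitions to invariance under permutations of the coordinates. I would fix an edge $e=(a,b)\in E$ and a configuration $\e\in S^X_*$. By exchangeability there is a path in $(S\times S,\phi)$ from $(\e_a,\e_b)$ to $(\e_b,\e_a)$; lifting this path along $e$, i.e.\ changing only the coordinates $a$ and $b$, produces a finite sequence of transitions in $\Phi_E$ connecting $\e$ to the configuration $\e^{ab}$ obtained by swapping $\e_a$ and $\e_b$. Telescoping the invariance $f(\e')-f(\e)=0$ over this sequence gives $f(\e^{ab})=f(\e)$, so that $f$ is invariant under the transposition of the values at the endpoints of any edge. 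Since $(X,E)$ is connected, writing the transposition of any two vertices $u,v$ as a product of edge-transpositions along a path from $u$ to $v$ shows that $f$ is invariant under every such transposition; as transpositions generate the finitary symmetric group of $X$, we conclude that $f(\sigma\e)=f(\e)$ for every finitary permutation $\sigma$ of $X$ and every $\e\in S^X_*$, where $(\sigma\e)_x\coloneqq\e_{\sigma^{-1}(x)}$.

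Next I would transfer this symmetry to the expansion. Because $(f^*_{\sigma\La}(\sigma\,\cdot))_{\La}$ is again a system with $f^*_{\sigma\La}(\sigma\,\cdot)\in C_\La(S^X_*)$ whose sum is $f(\sigma\,\cdot)=f$, the uniqueness in \cref{prop: expansion} yields the covariance $f^*_\La(\e)=f^*_{\sigma\La}(\sigma\e)$ for all finitary $\sigma$. Now the infiniteness of $X$ does the work: an infinite, connected, locally finite graph has infinite diameter, so for any $\La$ with $|\La|\geq 2$ there is a finitary permutation $\sigma$ with $\diam(\sigma\La)>R$, whence $f^*_\La(\e)=f^*_{\sigma\La}(\sigma\e)=0$ since $f^*_{\sigma\La}=0$ for sets of diameter exceeding $R$. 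Thus $f^*_\La=0$ whenever $|\La|\geq 2$, while the normalization forces $f^*_\emptyset=f(\star)=0$, leaving $f=\sum_{x\in X}f^*_{\{x\}}$. Applying covariance with a permutation carrying a fixed vertex $x_0$ to $x$ shows that $f^*_{\{x\}}(\e)=\xi(\e_x)$ for the single function $\xi\colon S\to\R$ defined by $\xi(s)\coloneqq f^*_{\{x_0\}}(\e)$ with $\e_{x_0}=s$, and $\xi(*)=0$. Hence $f=\xi_X$.

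It remains to check that $\xi$ is a conserved quantity. For any $((s_1,s_2),(s_1',s_2'))\in\phi$, choosing an edge $e=(a,b)\in E$ and configurations $\e,\e'\in S^X$ agreeing off $\{a,b\}$ with $(\e_a,\e_b)=(s_1,s_2)$ and $(\e'_a,\e'_b)=(s_1',s_2')$ gives a transition in $\Phi_E$, and invariance of $f=\xi_X$ yields $\xi(s_1')+\xi(s_2')=\xi(s_1)+\xi(s_2)$. Therefore $\tilde{\xi}$ is constant along edges of $(S\times S,\phi)$, hence constant on its connected components, so $\xi\in\C(S)$ as required. The main obstacle is the second step: recognizing that exchangeability is precisely what converts the purely local invariance hypothesis into full permutation symmetry of $f$, after which the infiniteness of $X$ eliminates every multi-site contribution through the covariance of the expansion.
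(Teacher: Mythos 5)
Your proof is correct, and its skeleton matches the paper's: exchangeability lets you shuffle the states of a configuration along edges (this is exactly \cref{lem: bijection}), the infinitude and local finiteness of $(X,E)$ supply sets of any fixed cardinality with diameter exceeding $R$, and the final verification that $\xi$ is conserved via a single-edge configuration is word-for-word the paper's closing computation. Where you genuinely diverge is in how the multi-site terms are killed. The paper never formulates full permutation-invariance of $f$ or equivariance of the expansion; instead it proves two separate lemmas by evaluating $f$ at explicit configurations: \cref{lem: vertex} (all single-site parts $f^*_{\{x\}}$ agree, by transporting a single non-base state from $x$ to $x'$) and \cref{lem: single} (all $f^*_\La$ with $\abs{\La}\geq 2$ vanish, by an induction on $\abs{\La}$ that compares $\iota^\La_* f(\e)$ with $\iota^{\La'}_*f(\e')$ for a spread-out copy $\La'$ and cancels the lower-order terms using the inductive hypothesis together with \cref{lem: vertex}). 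You instead derive the covariance identity $f^*_\La(\e)=f^*_{\sigma\La}(\sigma\e)$ for every finitary permutation $\sigma$, by observing that $(f^*_{\sigma\La}\circ\sigma)_\La$ is again a system in $C_\La(S^X_*)$ summing to $f$ and invoking the uniqueness clause of \cref{prop: expansion}; this single identity yields both lemmas at once, with no induction, since $\sigma\La$ can be chosen of diameter $>R$ whenever $\abs{\La}\geq2$, and a permutation carrying $x_0$ to $x$ identifies the single-site parts. Your route is tidier and isolates the structural point (the expansion is equivariant, and uniform functions invariant under the finitary symmetric group have no parts of diameter $>0$ when $X$ is infinite); the paper's route is more hands-on and only uses the specific rearrangements it needs, at the cost of an induction. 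Two small points you should make explicit if you write this up: that a bijection $\La\to\La'$ between finite subsets of the infinite set $X$ extends to a finitary permutation of $X$ (extend by any bijection of $(\La\cup\La')\setminus\La$ onto $(\La\cup\La')\setminus\La'$ and the identity elsewhere), and that the telescoped difference of \cref{lem: difference} agrees with the literal difference of values of the realization on $S^X_*$, so invariance via transitions really does give $f(\sigma\e)=f(\e)$ as an equality of functions.
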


Note that contrary to \cite{BKS20}, we do not need to assume that the
interaction $(S,\phi)$ is irreducibly quantified.
The global observables of the
entire large scale interacting system expressed by a uniform function
which is invariant via transitions may be regarded as a global
conserved quantity.
The above theorem implies that the global conserved quantity of the
entire system
is always given as the sum of a local conserved quantity $\xi$ associated to a local interaction.

We will prove \cref{thm: main} at the end of this section.
In what follows, we assume that $(S,\phi)$ is an interaction which is exchangeable
and that $(X,E)$ is connected and locally finite.
We first prove that the reshuffling of the components of a configuration $\e=(\e_x)\in S^X$
in a configuration space for a connected graph
preserves the connected components of $(S^X,\Phi_E)$.
Consider $\e=(\e_x)\in S^X$.
For any $x,y\in X$, if we let $\e^{x,y}$ be the configuration obtained from $\e$ by exchanging the
$x$ and $y$ components so that $\e^{x,y}_x\coloneqq \e_y$ and $\e^{x,y}_y\coloneqq \e_x$.
For any $e\in E$, since we have assumed that $(S,\phi)$ is exchangeable,
$(\e_{oe},\e_{te})$ and $(\e_{te},\e_{oe})$ are in the same connected component of $(S\times S,\phi)$.  
Hence the configuration $\e^{oe,te}$
and $\e$ are in the same connected component of $(S^X,\Phi_E)$.
More generally, for $x,y\in X$, since we have assumed that
$(X,E)$ is connected, there exists a path $\vec\gamma=(e^1,\ldots,e^N)$
from $x$ to $y$.  By successively exchanging the components of $\e$ with respect to $oe^i,te^i$
for the edges $e^i$ in $\vec\gamma$ from $i=1$ to $N$ and then in the opposite order from $i=N-1$ to $i=1$, we see that $\e^{x,y}$ and $\e$ are in the connected component 
of $(S^X,\Phi_E)$.  This gives the following.

\begin{lemma}\label{lem: bijection}
	For a finite $\La\subset X$, let $\sigma\colon \La\rightarrow \La$
	be a bijection of sets. Then for any configuration $\e=(\e_x)\in S^X$, the configuration $\e^\sigma$
	given by $\e^\sigma_x\coloneqq\e_{\sigma x}$
	for $x\in\La$ and $\e^\sigma_x=\e_x$ for $x\not\in\La$
	is in the same connected component of $(S^X,\Phi_E)$ as that of $\e$.
\end{lemma}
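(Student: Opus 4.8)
The plan is to reduce the statement for an arbitrary bijection $\sigma$ to the case of a transposition, which is precisely the elementary swap $\e\mapsto\e^{x,y}$ already treated in the discussion preceding the lemma. Recall that this discussion shows, using that $(S,\phi)$ is exchangeable and that $(X,E)$ is connected, that for any $x,y\in X$ and any configuration $\e\in S^X$, the swapped configuration $\e^{x,y}$ lies in the same connected component of $(S^X,\Phi_E)$ as $\e$. Since being in the same connected component is an equivalence relation, it suffices to exhibit the assignment $\e\mapsto\e^\sigma$ as a finite composition of such swaps.

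First I would record the elementary observation that a transposition of the finite set $\La$ induces exactly a coordinate swap. If $\tau\colon\La\to\La$ is the transposition exchanging $x,y\in\La$, then by definition $\e^\tau_z=\e_{\tau z}$ equals $\e_y$ for $z=x$, equals $\e_x$ for $z=y$, and equals $\e_z$ otherwise, so $\e^\tau=\e^{x,y}$. The extension by the identity outside $\La$ is consistent here, since a transposition of $\La$ moves no vertex outside $\La$.

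Next I would verify that the assignment $T_\sigma\colon\e\mapsto\e^\sigma$ is compatible with composition of permutations, namely that $T_\sigma\circ T_\rho=T_{\rho\sigma}$ for bijections $\sigma,\rho\colon\La\to\La$. Indeed, for each $x\in\La$,
\[
	\bigl(T_\sigma(T_\rho\e)\bigr)_x=(T_\rho\e)_{\sigma x}=\e_{\rho\sigma x}=(T_{\rho\sigma}\e)_x,
\]
and both sides agree with $\e_x$ for $x\notin\La$. Writing $\sigma=\tau_1\tau_2\cdots\tau_k$ as a product of transpositions of $\La$, which is possible since the symmetric group on a finite set is generated by transpositions, this identity yields $T_\sigma=T_{\tau_k}\circ\cdots\circ T_{\tau_1}$.

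Finally I would conclude: each $T_{\tau_i}$ is the elementary swap of the two vertices interchanged by $\tau_i$, and hence by the preceding discussion maps every configuration into its own connected component of $(S^X,\Phi_E)$. Applying these maps one after another, starting from $\e$, keeps us within a single connected component at every step, so $\e^\sigma=T_\sigma(\e)$ and $\e$ lie in the same connected component. The only point requiring care is the bookkeeping of the composition order, since the assignment $\sigma\mapsto T_\sigma$ reverses products; but since we only need that $T_\sigma$ is \emph{some} finite composition of swap maps and each swap preserves connected components, the order is immaterial to the conclusion.
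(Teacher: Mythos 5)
Your proposal is correct and follows essentially the same route as the paper's proof: decompose $\sigma$ into transpositions of $\La$ and invoke the preceding discussion showing that each elementary swap $\e\mapsto\e^{x,y}$ stays within a connected component of $(S^X,\Phi_E)$. Your verification that $T_\sigma\circ T_\rho=T_{\rho\sigma}$ (and the resulting reversal of composition order) is a careful spelling-out of a detail the paper leaves implicit, but it is the same argument.
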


\begin{proof}
	This follows from the fact that any bijection $\sigma\colon\La\rightarrow\La$ is obtained by
	successively exchanging two elements of $\La$, and that $\e^{x,y}$ and $\e$
	are in the same connected component of $(S^X,\Phi_E)$
	for any $x,y\in X$.
\end{proof}

We next prove the following.

\begin{lemma}\label{lem: vertex}
	Consider $f\in C(S^X_*)$ for a base state $*\in S$.
	For any $x\in X$, let $f^*_{\{x\}}$ be the function 
	with exact support $\{x\}$ in the expansion \cref{eq: expansion}.  If $f$ is invariant via
	transitions of $(S^X,\Phi_E)$, then the functions 
	\[
		f^*_{\{x\}}\colon S\rightarrow\R
	\]
	are equal as functions on $S$ for any $x\in X$.
\end{lemma}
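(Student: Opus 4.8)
The plan is to compare the two single-site terms $f^*_{\{x\}}$ and $f^*_{\{y\}}$ by probing $f$ at configurations whose support is a single vertex, and then to transport such an excitation from $x$ to $y$ using the reshuffling invariance established in \cref{lem: bijection}. The key point is that a configuration supported on one vertex isolates exactly the exact-support-$\{x\}$ term of the expansion \cref{eq: expansion}, while the hypothesis that $f$ is invariant via transitions forces the values obtained at $x$ and at $y$ to coincide.

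Concretely, I would fix $x,y\in X$ with $x\neq y$ and a state $s\in S$, and let $\e\in S^X_*$ be the configuration with $\e_x=s$ and $\e_z=*$ for all $z\neq x$, so that $\Supp(\e)\subseteq\{x\}$. By the defining vanishing property of $C_\La(S^X_*)$, a term $f^*_\La(\e)$ vanishes whenever $\La\not\subseteq\Supp(\e)$, so the only possibly nonzero contributions to $f=\sum_{\La\in\sI}f^*_\La$ come from $\La\in\{\emptyset,\{x\}\}$. Hence $f(\e)=f^*_\emptyset+f^*_{\{x\}}(s)$, where $f^*_\emptyset=f(\star)$ is the constant term and $f^*_{\{x\}}(s)$ denotes the value of the function $f^*_{\{x\}}\colon S\to\R$ at $s$. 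Exchanging the $x$- and $y$-components produces the configuration $\e^{x,y}$ with $\e^{x,y}_y=s$ and $*$ elsewhere, and the identical computation gives $f(\e^{x,y})=f^*_\emptyset+f^*_{\{y\}}(s)$.

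To conclude that these two values agree, I would invoke \cref{lem: bijection} applied to the transposition of $x$ and $y$: since $(X,E)$ is connected, it shows that $\e$ and $\e^{x,y}$ lie in the same connected component of $(S^X,\Phi_E)$. Because both configurations have finite support and each transition alters only finitely many coordinates, the connecting path stays inside the finite-support locus, so $\e$ and $\e^{x,y}$ lie in the same component of $(S^X_*,\Phi_E^*)$. The invariance hypothesis together with \cref{lem: difference} then yields $f(\e^{x,y})-f(\e)=0$. The constant term $f^*_\emptyset$ cancels, leaving $f^*_{\{x\}}(s)=f^*_{\{y\}}(s)$; as $s\in S$ was arbitrary we get $f^*_{\{x\}}=f^*_{\{y\}}$ as functions on $S$, and as $x,y\in X$ were arbitrary this is the claim.

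I do not anticipate a genuine obstacle here, since the heavy lifting is done by the reshuffling invariance of \cref{lem: bijection}. The only points that require care are verifying that a single-site probe configuration cleanly isolates the exact-support-$\{x\}$ term (which is precisely the defining property of $C_{\{x\}}(S^X_*)$) and checking that the reshuffling path witnessing $\e\sim\e^{x,y}$ remains within $S^X_*$, so that the invariance of the realization is genuinely applicable.
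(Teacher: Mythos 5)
Your proof is correct and takes essentially the same approach as the paper's: probe $f$ with single-site configurations to isolate the exact-support-$\{x\}$ term, transport the excitation from $x$ to $y$ via \cref{lem: bijection}, and use invariance under transitions to equate the resulting values. Your explicit check that the connecting path remains in $S^X_*$ makes precise a point the paper leaves implicit, but the argument is the same.
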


\begin{proof}
	We let $s\in S$.  For $x,x'\in X$, consider configurations $\e=(\e_x),\e'=(\e'_x)\in S^X_*$
	such that the component of $\e$ at $x$ and $\e'$ and $x'$ coincides with $s$,
	and all of the other components coincide with the base state.
	By \cref{lem: bijection}, the configurations $\e$ and $\e'$ are in the same connected component of $S^X_*
	\subset S^X$, there exists a sequence of transitions from $\e$ to $\e'$.
	Since $f$ is invariant via transitions, we have 
	\[
		f^*_{\{x\}}(s)=f^*_{\{x\}}(\e)=f(\e)-f(\star)=f(\e')-f(\star)=f^*_{\{x'\}}(\e')=f^*_{\{x'\}}(s')
	\]
	as desired.  For the first and last equality, we have used the identification of $C(S)$
	with $C(S^{\{x\}})$ and $C(S^{\{x'\}})\subset C(S^X_*)$.
\end{proof}

\begin{lemma}\label{lem: single}
	Assume that $X$ is infinite,
	and let $f\in C^0_\unif(S^X)$. If $f$ is invariant via transitions of $(S^X,\Phi_E)$,
	then for a base state $*\in S$ and $f$ normalized so that $f(\star)=0$, we have
	\[
		f=\sum_{x\in X}f^*_{\{x\}}
	\]
	for the expansion of \cref{prop: expansion}.
\end{lemma}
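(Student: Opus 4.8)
The plan is to establish, for the normalized representative $f\in C^0_\unif(S^X_*)$ with $f(\star)=0$, the identity of functions $f(\e)=\sum_{x\in X}f^*_{\{x\}}(\e_x)$ for every $\e\in S^X_*$. Since both sides are genuine functions on $S^X_*$, this is precisely the asserted equality $f=\sum_{x\in X}f^*_{\{x\}}$ (and, by the uniqueness part of \cref{prop: expansion}, it amounts to the vanishing of every term $f^*_\La$ with $\diam(\La)>0$). By \cref{def: uniform} I fix $R>0$ with $f^*_\La=0$ whenever $\diam(\La)>R$, and I abbreviate $\xi\coloneqq f^*_{\{x\}}$, which by \cref{lem: vertex} is a single function on $S$ independent of $x$ and satisfies $\xi(*)=0$.

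The key step, and the only place the infiniteness of $X$ enters, is a ``spreading out'' construction. Given $\e\in S^X_*$ with finite support $T\coloneqq\Supp(\e)$, I would use that $(X,E)$ is infinite and locally finite to produce a set $T'\subset X$, disjoint from $T$, with $|T'|=|T|$ and all pairwise graph distances strictly greater than $R$. Such $T'$ is built greedily: each new vertex is chosen outside the finite union of the balls $B(\cdot,R+1)$ centered at the previously chosen vertices and at the vertices of $T$, which is possible at every stage precisely because each such ball is finite while $X$ is infinite.

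Next I would transport $\e$ onto $T'$. Choosing a bijection $\sigma$ of the finite set $\La\coloneqq T\sqcup T'$ that interchanges $T$ and $T'$, the reshuffled configuration $\e^\sigma$ with $\e^\sigma_x=\e_{\sigma x}$ has support $T'$ and carries over the same nonzero values. By \cref{lem: bijection} the configurations $\e$ and $\e^\sigma$ lie in the same connected component of $(S^X,\Phi_E)$, so \cref{lem: difference} together with the invariance hypothesis yields $f(\e)=f(\e^\sigma)$. Evaluating the expansion at $\e^\sigma$, any $\La'\subset T'$ with $|\La'|\geq 2$ contains two vertices at distance $>R$, so $\diam(\La')>R$ and $f^*_{\La'}=0$, while $f^*_\emptyset=f(\star)=0$. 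Hence $f(\e^\sigma)=\sum_{y\in T'}f^*_{\{y\}}(\e^\sigma_y)=\sum_{y\in T'}\xi(\e_{\sigma y})=\sum_{x\in T}\xi(\e_x)$, where the middle equality uses \cref{lem: vertex} and the last reindexes along $\sigma$. Combining gives $f(\e)=\sum_{x\in T}\xi(\e_x)=\sum_{x\in X}f^*_{\{x\}}(\e_x)$, as required.

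The main obstacle is conceptual rather than computational: one must recognize that invariance under transitions, via exchangeability and connectedness as packaged in \cref{lem: bijection}, allows an arbitrary configuration to be replaced by one whose support is arbitrarily spread out, and that this replacement is available only because $X$ is infinite. Once the support is spread beyond the uniformity radius $R$, the bound of \cref{def: uniform} annihilates every multi-vertex term and reduces $f$ to its single-vertex part. The remaining work, namely the greedy construction of $T'$ and the bookkeeping of the permutation $\sigma$, is routine.
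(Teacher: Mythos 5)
Your proof is correct, and it uses the same essential mechanism as the paper --- transport the configuration via \cref{lem: bijection} into a region where the uniformity radius $R$ of \cref{def: uniform} kills the multi-vertex terms, then identify the surviving single-site terms through \cref{lem: vertex} --- but you organize it differently, and the difference is worth noting. The paper proves the vanishing of each $f^*_\La$ with $\abs{\La}\geq 2$ by induction on $\abs{\La}$: it transports a configuration supported on $\La$ to a set $\La'$ of the \emph{same cardinality} with merely $\diam(\La')>R$, and then needs the inductive hypothesis to annihilate the terms $f^*_{\La''}$ for proper subsets $\La''\subsetneq\La'$ of size at least two, which may well have small diameter. You instead demand the stronger separation that all pairwise distances in $T'$ exceed $R$; this guarantees that \emph{every} subset of $T'$ of size at least two has diameter greater than $R$, so uniformity alone wipes out all higher terms simultaneously, no induction required, and you obtain the pointwise identity $f(\e)=\sum_{x\in X}f^*_{\{x\}}(\e_x)$ on $S^X_*$ in one pass (which, by the uniqueness in \cref{prop: expansion}, is equivalent to the paper's formulation). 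The trade is clean: the paper's target set is easier to exhibit (any set of the right cardinality and large diameter), while yours requires the greedy construction avoiding finitely many balls $B(\cdot,R+1)$ --- but that construction uses exactly the same hypotheses (local finiteness plus infiniteness of $X$) that the paper already invokes to produce $\La'$, so nothing extra is consumed, and the resulting argument is flatter and arguably more transparent. One small point of care, which you handle implicitly and correctly: the invariance hypothesis is applied through \cref{lem: difference} along a path from $\e$ to $\e^\sigma$, and since each transition alters only two coordinates the entire path stays in $S^X_*$, so the telescoped difference is a genuine difference of values of the realization; your use of $\xi(*)=0$, forced by $f^*_{\{x\}}\in C_{\{x\}}(S^X_*)$, is likewise what makes the final sum over all of $X$ rather than just $T$ legitimate.
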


\begin{proof}
	Since $f$ is uniform, there exists $R>0$ such that $f^*_\La=0$ if $\diam(\La)>R$.
	It is sufficient to prove that for any finite $\La\in\sI$ such that $\abs{\La}\geq2$,
	we have $f^*_\La=0$.  We consider $\La\in\sI$, and assume that $f^*_{\La''}=0$
	for any $\La''$ such that $2 \le \abs{\La''} < |\La|$.  This condition is trivially
	true if $\abs{\La}=2$.  Take $\La'\subset X$ such that $\abs{\La'}=\abs{\La}$
	and $\diam(\La')>R$.  Such $\La'$ exists since $(X,E)$ is locally finite and infinite.
	By construction, $f^*_{\La'}=0$.  Consider any $\e\in S^\La$, and denote again by $\e$
	the configuration in $S^X_*$ whose components coincides with that of $\e$
	for $x\in\La$ and is at base state for $x\not\in\La$.
	We fix a bijection $\La\cong\La'$, $x\mapsto x'$ and let $\e'$ be
	the configuration in $S^X_*$ such that $\e'_{x'}\coloneqq\e_x$ for $x'\in\La'$
	and is at base state for $x'\not\in\La'$.  Since $\e'$ is obtained from $\e$
	by rearranging the components, we see from \cref{lem: bijection} that $\e$ and $\e'$ are in the 
	same connected component of $S^X_*$.  Hence there exists a finite sequence of transitions from $\e$ to $\e'$.
	Since $f$ is invariant via transitions of $(S^X,\Phi_E)$, we have $f(\e)=f(\e')$.  Then
	\begin{align*}
		f(\e)&=\iota^\La_*f(\e)=f^*_\La(\e)+\sum_{\La''\subsetneq\La}f^*_{\La''}(\e)
		=f^*_\La(\e)+\sum_{x\in\La}f^*_{\{x\}}(\e),\\
		f(\e')&=\iota^{\La'}_*f(\e')=f^*_{\La'}(\e')+\sum_{\La''\subsetneq\La'}f^*_{\La''}(\e')
		=\sum_{x\in\La'}f^*_{\{x\}}(\e')
	\end{align*}
	since $f^*_{\La'}=0$.
	By \cref{lem: vertex}, we have $\sum_{x\in\La}f^*_{\{x\}}(\e)=\sum_{x\in\La'}f^*_{\{x\}}(\e')$.
	This shows that for any $\e\in S^\La$, we have $f^*_\La(\e)=0$, hence $f^*_\La=0$. Induction on the number of elements of $\Lambda$ completes the proof.
\end{proof}

\begin{proof}[Proof of \cref{thm: main}]
	For $x\in X$, let $\xi\coloneqq f^*_{\{x\}}\colon S\rightarrow\R$ viewed as a function on $S$.
	By \cref{lem: vertex}, the function $\xi$ on $S$ is independent of the choice of $x$,
	and we have $\xi_x=f^*_{\{x\}}$ as a function on $S^X$, and by
	\cref{prop: expansion} and \cref{lem: single}, 
	we have $f=\sum_{x\in X}\xi_x$.  In order to show that $\xi$ is a conserved quantity,
	consider an edge $e=(x_1,x_2)\in E$.  Consider any $(s_1,s_2),(s'_1,s'_2)\in S\times S$
	such that $((s_1,s_2),(s'_1,s'_2))\in\phi$.
	If we let $\e, \e'\in S^X_*$ to be the configuration such that $(\e_{x_1},\e_{x_2})=(s_1,s_2)$,
	$(\e'_{x_1},\e'_{x_2})=(s'_1,s'_2)$, and $\e_x,\e'_x$ is at base state for $x\neq x_1,x_2$.
	Then by the construction, we have $(\e,\e')\in\Phi_E$.	
	This shows that
	\[
		\xi(s_1)+\xi(s_2)=\xi_{x_1}(\e)+\xi_{x_2}(\e)=f(\e)=f(\e')
		=\xi_{x_1}(\e')+\xi_{x_2}(\e')=\xi(s'_1)+\xi(s'_2).
	\]
	This proves that $\xi$ is a conserved quantity, and $f=\sum_{x\in X}\xi_x=\xi_X$ as desired.
\end{proof}

%
%
%
\section{The $0$-th Uniform Cohomology of the Configuration Space}\label{sec: cohomology}
%
%
%

In this section, we will interpret our main result \cref{thm: main} in terms of
the $0$-th cohomology of the configuration space with transition structure $(S^X, \Phi_E)$.
We first review the cohomology of general graphs
(see for example \cite{BKS20}*{Appendix A}).

\begin{definition}\label{def: coh graph}
	For any symmetric digraph $(X,E)$, we let
	\begin{align*}
		C(X) &\coloneqq\Map(X, \bbR),&
		C^1(X) &\coloneqq\Map^\alt(E, \bbR),
	\end{align*}
	where $\Map^{\alt}(E, \bbR)
	\coloneqq\{\omega\colon E\rightarrow \bbR 
	\mid \forall (x,y)\in E\,\,\omega(y,x)=-\omega(x,y) \}$.
	Furthermore, we define the differential
	\begin{equation}\label{eq: diff A}
		\partial\colon C(X)\rightarrow C^1(X),\qquad f\mapsto\partial f
	\end{equation}
	by  $\partial f(e)\coloneqq f(te) - f(oe)$ for any $e\in E$.	
	We define the \textit{cohomology} of $(X,E)$ by
	\begin{align*}
		H^0(X)&\coloneqq \Ker\partial, &  H^1(X)&\coloneqq C^1(X)/\partial C(X),
	\end{align*}
	and $H^m(X)\coloneqq\{0\}$
	for any $m\in\bbN$ such that $m\neq0,1$.
\end{definition}

The $0$-th cohomology $H^0(X)$ is known to be the space of 
functions which are constant on the connected components of $(X,E)$ (see \cite{BKS20}*{Proposition A.8}).
We will apply the above construction to the configuration space with transition structure.
Let $(S,\phi)$ be an interaction, and let $(X,E)$ be a locally finite connected symmetric digraph.
We let $(S^X,\Phi_E)$ be the configuration space with transition  structure of \cref{def: CSTS}
associated to $(S,\phi)$ and $(X,E)$.  Following \cref{def: coh graph}, the cohomology of the
configuration space with transition structure is given by
\begin{align*}
		C(S^X) &\coloneqq\Map(S^X, \bbR),&
		C^1(S^X) &\coloneqq\Map^\alt(\Phi_E, \bbR),
\end{align*}
where $\Map^{\alt}(\Phi_E, \bbR)\coloneqq\{\omega\colon \Phi_E\rightarrow \bbR 
\mid \forall(\e,\e')\in \Phi_E\,\,\omega(\e',\e)=-\omega(\e,\e') \}$.
Furthermore, we define the differential
\begin{equation}\label{eq: diff A}
	\partial\colon C(S^X)\rightarrow C^1(S^X),\qquad f\mapsto\partial f
\end{equation}
by  $\partial f(\e,\e')\coloneqq f(\e') - f(\e)$ for any $(\e,\e')\in\Phi_E$.	
We define the \textit{cohomology} of $(S^X,\Phi_E)$ by
\begin{align}\label{eq: H0}
	H^0(S^X)&\coloneqq \Ker\partial, &  H^1(S^X)&\coloneqq C^1(S^X)/\partial C(S^X),
\end{align}
and $H^m(S^X)\coloneqq\{0\}$
for any $m\in\bbN$ such that $m\neq0,1$. 

When $X$ is infinite, the graph $(S^X,\Phi_E)$ generally has an infinite number of connected
components.  Since the cohomology $H^0(S^X)$
is equivalent to functions on $X$ which are constant on the connected components
of $(X,E)$, this implies that $H^0(S^X)$ is infinite dimensional.
We will replace the functions $C(S^X)=\Map(S^X,\R)$ with the space of uniform functions
to construct a suitable definition of uniform cohomology.

First, since the constant function maps to zero, the differential \cref{eq: diff A}
induces a differential
$
	\partial\colon C^0(S^X)\rightarrow C^1(S^X).
$
The restriction to local functions gives 
\begin{equation}\label{eq: loc-diff}
	\partial\colon C^0_\loc(S^X)\rightarrow C^1(S^X).
\end{equation}
We may linearly extend this differential to uniform functions as follows.

\begin{proposition}
	The differential $\partial\colon C^0_\loc(S^X)\rightarrow C^1(S^X)$ extends to a differential
	\[
		\partial\colon C^0_\unif(S^X)\rightarrow C^1(S^X)
	\]
	on the space of uniform functions.
\end{proposition}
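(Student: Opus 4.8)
The plan is to define $\partial f$ for a uniform function $f\in C^0_\unif(S^X)$ directly by the formula $\partial f(\e,\e')\coloneqq f(\e')-f(\e)$ for each transition $(\e,\e')\in\Phi_E$, where the right-hand side is interpreted via the well-defined difference furnished by \cref{lem: difference}, and then to verify that this assignment lands in $C^1(S^X)$, is $\R$-linear, descends to the quotient modulo constant functions, and agrees with the existing differential on local functions.

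First I would observe that for any transition $(\e,\e')\in\Phi_E$, the set $\Delta_{\e,\e'}=\{x\in X\mid \e_x\neq\e'_x\}$ is contained in $\{oe,te\}$ for the edge $e\in E$ realizing the transition, since by \cref{def: CSTS} we have $\e_x=\e'_x$ for all $x\neq oe,te$. In particular $\Delta_{\e,\e'}$ is finite, so \cref{lem: difference} applies and produces a well-defined value $f(\e')-f(\e)$, independent of the base state $*\in S$ used to expand $f$. This allows me to set $\partial f(\e,\e')\coloneqq f(\e')-f(\e)$ for every transition.

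Next I would check that $\partial f\in C^1(S^X)=\Map^\alt(\Phi_E,\R)$. Since $(S^X,\Phi_E)$ is a symmetric digraph, $(\e',\e)\in\Phi_E$ whenever $(\e,\e')\in\Phi_E$, and the defining formula immediately gives $\partial f(\e',\e)=f(\e)-f(\e')=-\partial f(\e,\e')$, so $\partial f$ is alternating as required. The $\R$-linearity of $f\mapsto\partial f$ follows from the linearity in $f$ of the finite-sum expression \cref{eq: difference} defining the difference. Moreover, replacing $f$ by $f+c$ for a constant $c$ leaves $f(\e')-f(\e)$ unchanged, so $\partial f$ depends only on the class of $f$ modulo constants, and hence the construction descends to a map on $C^0_\unif(S^X)$.

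Finally, I would note that when $f$ is a local function, \cref{lem: difference} guarantees that the difference $f(\e')-f(\e)$ coincides with the ordinary difference of the genuine function $f$ on $S^X$, so $\partial$ agrees on $C^0_\loc(S^X)\subset C^0_\unif(S^X)$ with the differential of \cref{eq: loc-diff}. This exhibits the extended $\partial$ as the desired extension. I expect no substantial obstacle here: the essential content is already packaged in \cref{lem: difference}, and the remaining work is the routine verification that the resulting difference operator is alternating, $\R$-linear, and invariant under the addition of constants.
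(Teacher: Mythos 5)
Your proposal is correct and follows essentially the same route as the paper: both define $\partial f(\e,\e')\coloneqq f(\e')-f(\e)$ via the well-defined difference of \cref{lem: difference}, which applies because $\Delta_{\e,\e'}$ is finite for any $(\e,\e')\in\Phi_E$. Your additional verifications (alternation from the symmetry of $(S^X,\Phi_E)$, linearity from the finite sum \cref{eq: difference}, and invariance under adding constants) are exactly the routine checks the paper's proof leaves implicit.
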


\begin{proof}
	For any $(\e,\e')\in\Phi_E$, by definition, there exists $(x,y)\in E$ such that $\e_z=\e'_z$ for $z\neq x,y$.
	Hence $\Delta_{\e,\e'}=\{ x \in X\mid \e_x \neq\e'_x \}$ is a finite set.
	By \cref{lem: difference}, we have a well-defined difference $\p f(\e,\e')=f(\e')-f(\e)$ .
	Hence defines a function $\partial f\colon\Phi_E\rightarrow\R$.
	The map $f\mapsto\partial f$ gives a differential
	linearly extending the differential \cref{eq: loc-diff}.
\end{proof}

Using this differential, we may define the $0$-th uniform cohomology as follows.

\begin{definition}
	Following \cref{eq: H0}, we define the $0$-th uniform cohomology of $(S^X,\Phi_E)$ by
	\[
		H^0_\unif(S^X)\coloneqq\Ker\bigl(\partial\colon C^0_\unif(S^X)\rightarrow C^1(S^X)\bigr).
	\]
\end{definition}

As an analogy of the fact that $0$-th cohomology $H^0(X)$ coincides with the space of functions
constant on the connected components of $(X,E)$ (see \cite{BKS20}*{Proposition A.8}),
we may prove that the $0$-th uniform cohomology $H^0_\unif(S^X)$
coincides with the space of functions constant on the connected 
components of $(S^X,\Phi_E)$, i.e.\,
functions invariant via transitions of $(S^X,\Phi_E)$.

\begin{lemma}\label{lem: connected}
	Consider a uniform function $f\in C^0_\unif(S^X)$.
	Then  $f\in H^0_\unif(S^X)$ if and only if $f$ is invariant via transitions of $(S^X,\Phi_E)$.
\end{lemma}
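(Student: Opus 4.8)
The plan is essentially to unwind the definitions, since both conditions express the vanishing of the very same family of differences. First I would recall that, by definition, $f\in H^0_\unif(S^X)$ means precisely that $\partial f=0$ as an element of $C^1(S^X)=\Map^\alt(\Phi_E,\R)$; that is, $\partial f(\e,\e')=0$ for every transition $(\e,\e')\in\Phi_E$. On the other side, $f$ being invariant via transitions of $(S^X,\Phi_E)$ means, by definition, that $f(\e')-f(\e)=0$ for every $(\e,\e')\in\Phi_E$. So the task reduces to identifying these two collections of quantities.

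Next I would invoke \cref{lem: difference} to ensure the right-hand expression is meaningful: for any $(\e,\e')\in\Phi_E$ the set $\Delta_{\e,\e'}$ is finite, since by the definition of $\Phi_E$ it is contained in $\{oe,te\}$ for the edge $e\in E$ realizing the transition. Hence the difference $f(\e')-f(\e)$ is a well-defined value, and, by the proposition extending $\partial$ to uniform functions, we have $\partial f(\e,\e')=f(\e')-f(\e)$ for every $(\e,\e')\in\Phi_E$. With this in hand, the condition $\partial f=0$ in $C^1(S^X)$ is literally the condition $f(\e')-f(\e)=0$ for all $(\e,\e')\in\Phi_E$, which is exactly the definition of invariance via transitions, proving both implications simultaneously.

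I do not anticipate any genuine obstacle: the statement is an immediate chain of equivalences once the differential is in place. The only point that requires care, and the one I would be sure to cite explicitly rather than gloss over, is the well-definedness of $\partial f$ on the space $C^0_\unif(S^X)$ of uniform functions, which is not a priori clear because a uniform function is not itself a function on $S^X$. This is precisely what \cref{lem: difference} and the subsequent extension of $\partial$ provide, and the remainder is a direct comparison of definitions.
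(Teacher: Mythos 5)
Your proposal is correct and matches the paper's own proof, which likewise unwinds both conditions to the single identity $\partial f(\e,\e')=f(\e')-f(\e)=0$ for all $(\e,\e')\in\Phi_E$, with the well-definedness of $\partial$ on $C^0_\unif(S^X)$ supplied by \cref{lem: difference} and the extension proposition. Your explicit citation of that well-definedness step is a slight improvement in care, but the argument is essentially identical.
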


\begin{proof}
	Let $f\in H^0_\unif(S^X)$.  
	Then for any $(\e,\e')\in \Phi_E$,
	since $\p f=0$, we have
	\[
		\p f(\e,\e')=f(\e')-f(\e)=0.
	\]
	This proves that $f$ is invariant via transitions of $(S^X,\Phi_E)$.
	Conversely, suppose $f$ is invariant via transitions of $(S^X,\Phi_E)$.
	Then for any $(\e,\e')\in\Phi_E$, we have $\p f(\e,\e')=f(\e')-f(\e)=0$.
	This proves that $f\in H^0_\unif(S^X)$ as desired.
\end{proof}

Our main theorem gives the following corollary.

\begin{corollary}\label{cor: main}
	Let $(S,\phi)$ be an interaction which is exchangeable, and let $(X,E)$ be a connected
	and locally finite symmetric digraph
	such that $X$ is infinite.
	Then we have a canonical isomorphism $\C(S)\cong H^0_\unif(S^X)$, given by $\xi\mapsto\xi_X$
	for any conserved quantity $\xi\in\C(S)$.
\end{corollary}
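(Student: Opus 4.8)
The key components are Theorem~\ref{thm: main}, the homomorphism $\C(S)\hookrightarrow C^0_\unif(S^X)$ of Definition~\ref{def: CQX}, Lemma~\ref{lem: CQ}, and Lemma~\ref{lem: connected}. First I would reduce the statement to showing that the map $\xi\mapsto\xi_X$ gives a well-defined linear bijection onto $H^0_\unif(S^X)$. The source $\C(S)$ and the map $\xi\mapsto\xi_X$ are already in hand, so the work is to verify (i) the image lands in $H^0_\unif(S^X)$, (ii) the map is injective, and (iii) the map is surjective.

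\textbf{For (i),} I would invoke Lemma~\ref{lem: CQ}, which states that $\xi_X$ is invariant via transitions of $(S^X,\Phi_E)$, combined with Lemma~\ref{lem: connected}, which identifies the uniform functions invariant via transitions with exactly $H^0_\unif(S^X)$. Hence $\xi_X\in H^0_\unif(S^X)$ for every $\xi\in\C(S)$, so the map restricts to a linear map $\C(S)\to H^0_\unif(S^X)$. \textbf{For (iii),} surjectivity is precisely the content of Theorem~\ref{thm: main}: any $f\in H^0_\unif(S^X)$ is, by Lemma~\ref{lem: connected}, a uniform function invariant via transitions, and the theorem produces a conserved quantity $\xi$ with $f=\xi_X$. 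Thus every element of the target is hit.

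\textbf{For (ii),} injectivity, I would argue directly at the level of representatives. Suppose $\xi\in\C(S)$ has $\xi_X=0$ in $C^0_\unif(S^X)$. Fixing a base state $*\in S$ and normalizing $\xi(*)=0$, the proof of Lemma~\ref{lem: CQ} shows that $\xi_X=\sum_{x\in X}\xi_x$ is already the expansion of Proposition~\ref{prop: expansion}, with $(\xi_X)^*_{\{x\}}=\xi_x$ and all other components zero. The uniqueness clause of Proposition~\ref{prop: expansion} then forces each $\xi_x=0$ as an element of $C(S^{\{x\}})$, i.e.\ $\xi(s)=\xi(*)=0$ for all $s\in S$; hence $\xi$ is constant, so $\xi=0$ in $\C(S)$. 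This shows the map is injective. Since $\C(S)$ is defined modulo constants, I would take care that the normalization is consistent, but this is routine given Corollary~\ref{cor: independent}.

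\textbf{The only subtle point} is ensuring that ``canonical'' is justified, i.e.\ that the isomorphism does not secretly depend on the choice of base state $*\in S$ used to realize the uniform functions. I expect this to be the mildest of obstacles: the homomorphism $\xi\mapsto\xi_X$ of Definition~\ref{def: CQX} is manifestly base-point independent as a map into $C^0_\unif(S^X)$, and Corollary~\ref{cor: independent} guarantees that $C^0_\unif(S^X_*)$ and $C^0_\unif(S^X_{*'})$ are canonically identified compatibly with inclusion of local functions. Since $\xi_x$ is local, its class is intrinsic, and the whole argument descends to the base-point-free space $C^0_\unif(S^X)$. Thus the corollary follows by combining (i)--(iii), and no genuinely new computation beyond Theorem~\ref{thm: main} is required.
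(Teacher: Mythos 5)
Your proposal is correct and follows essentially the same route as the paper: well-definedness of $\xi\mapsto\xi_X$ via \cref{lem: CQ} and \cref{lem: connected}, surjectivity as a direct application of \cref{thm: main}, and injectivity by normalizing $\xi(*)=0$. The only cosmetic difference is in the injectivity step, where the paper evaluates $\xi_X$ at the single-site configurations $\e^s$ (giving $\xi(s)=\xi_X(\e^s)$) instead of invoking the uniqueness clause of \cref{prop: expansion}; the two mechanisms are interchangeable and equally routine.
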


\begin{proof}
	Suppose $\xi\in\C(S)$.  Then by \cref{lem: CQ}, the uniform function
	$\xi_X$ is invariant via transitions of $(S^X,\Phi_E)$.
	Hence by \cref{lem: connected}, we have $\xi_X\in H^0_\unif(S^X)$.
	This gives a homomorphism $\iota\colon\C(S)\rightarrow H^0_\unif(S^X)$.
	Fix a base $*\in S$ and take a representative of $\xi$ so that $\xi(*)=0$,
	and let $x\in X$.  Then for any $s\in S$, let $\e^s\in S^X$ be the configuration
	such that $\e^s_x\coloneqq s$ and $\e^s_z\coloneqq*$ for $z\neq x$.  Then $\xi(s)=\xi_X(\e^s)$.
	Hence if $\xi_X=0$ in $H^0_\unif(S^X)$, then this implies that $\xi(s)=0$
	for any $s\in S$, hence $\iota$ is injective.
	Next, suppose $f\in H^0_\unif(S^X)$. Then by
	\cref{lem: connected}, the uniform function $f$ is invariant via 
	transitions of $(S^X,\Phi_E)$.  Hence by \cref{thm: main}, 
	there exists $\xi\in\C(S)$ such that $f=\xi_X$.
	This proves that $\iota$ is surjective, hence that we have an 
	isomorphism $\C(S)\cong H^0_\unif(S^X)$ as desired.
\end{proof}

\subsection*{Acknowledgement}
The authors would like to thank members of the Hydrodynamic Limit Seminar
at Keio/RIKEN, especially Fuyuta Komura, Jun Koriki, 
Hidetada Wachi, Hayate Suda and Hiroko Sekisaka 
for discussion and their continual support for this project.

\begin{bibdiv}
	\begin{biblist}
		\bibselect{Bibliography}
	\end{biblist}
\end{bibdiv}

\end{document}